\documentclass{amsart}
\usepackage{amsmath, amssymb, amsthm, xcolor}
\usepackage{tikz}
\usepackage{caption}
\usepackage{subcaption}
\newtheorem{theorem}{Theorem}
\newtheorem{lemma}{Lemma}
\newtheorem{definition}{Definition}

\newtheorem{proposition}{Proposition}
\newtheorem{corollary}{Corollary}
\newtheorem{remark}{Remark}
\newtheorem{example}{Example}

\newcommand{\ts}{\mathsf{T}}

\begin{document}
\title[Optimal Ricci curvature MCMC methods]{Optimal Ricci curvature Markov chain Monte Carlo methods on finite states}
\author[Li]{Wuchen Li}
\email{wuchen@mailbox.sc.edu}
\address{Department of Mathematics, University of South Carolina, Columbia, SC 29208.}
\author[Lu]{Linyuan Lu}
\email{lu@math.sc.edu}
\address{Department of Mathematics, University of South Carolina, Columbia, SC 29208.}

\keywords{Transport information geometry; Hessian matrix in probability simplex; Generalized Metropolis–Hastings algorithms; MSC codes: 46N10; 05C21; 60J27. }
\thanks{W Li is supported by AFOSR MURI FA9550-18-1-0502, and AFOSR YIP award 2023. Both W. Li and L. Lu are supported by NSF RTG: 2038080.}
\maketitle
\begin{abstract}
We construct a new Markov chain Monte Carlo method on finite states with optimal choices of acceptance-rejection ratio functions. We prove that the constructed continuous time Markov jumping process has a global in-time convergence rate in $L^1$ distance. The convergence rate is no less than one-half and is independent of the target distribution. For example, our method recovers the Metropolis–Hastings (MH) algorithm on a two-point state. And it forms a new algorithm for sampling general target distributions. Numerical examples are presented to demonstrate the effectiveness of the proposed algorithm. 
\end{abstract}

\section{Introduction}
Markov chain Monte Carlo (MCMC) methods \cite{MRRTT} are essential computational algorithms in scientific computing, statistics, and Bayesian inverse problems with applications in machine learning \cite{WKH, RS}. The MCMC method generates random samples from a target distribution, either in large dimensional sampling space or with intractable formulations. A typical MCMC method is the Metropolis–Hastings (MH) algorithm. It constructs an acceptance-rejection type Markov chain process, following which one generates the samples from target distributions. General MCMC methods have been widely studied in \cite{RR,RS}. 

The convergence analysis of the MCMC algorithm is a critical problem \cite{RS}. Recently, it has been known that the reversible Markov process forms a gradient flow in optimal transport-type metric spaces; see continuous states in \cite{am2006, BE, vil2008}, and discrete states in \cite{chow2012, maas2011gradient,M}, also named Onsager gradient flows in statistical physics \cite{GLL,ON}. The Hessian operators of relative entropies (divergence functions/free energies) in optimal transport-type metric spaces provide a convex analysis framework in establishing the convergence rates of MCMC methods. For example, on a finite state space, the convergence rate of the Markov jumping process follows from the smallest eigenvalue of Hessian matrices \cite{Maas2012,M1}. This smallest eigenvalue is called entropic Ricci curvature lower bound on a finite state Markov process. We also name them the smallest eigenvalue of mean field information Hessian matrices \cite{LiLuGraph}. 

From now on, we focus on constructing a continuous time finite-state Markov jumping process. A natural ``inverse problem'' arises. {\em Can we apply the convergence analysis in optimal transport-type metric spaces to construct a Markov jumping process for sampling a target distribution with optimal (largest) convergence rate? What are  finite-state MCMC algorithms with the optimal Ricci curvature lower bound?} 

In this paper, we design a finite state Markov jumping process by solving the optimal Ricci curvature lower bound problem. We construct a particular $Q$-matrix (generator) of the Markov jumping process. We then provide the exponential convergence analysis for the constructed continuous-time MCMC method in $\phi$-divergences, including the Kullback–Leibler divergence as an important example. We also show that the global-in-time convergence rate for any target distribution can be at least one-half in $L^1$ distance. We also use numerical experiments to verify that the proposed algorithm converges faster than the Metropolis–Hastings algorithm.

 The main result is sketched as follows. Given a finite state $I=\{1,\cdots, n\}$, suppose that there exists a target probability distribution $\pi=(\pi_i)_{i=1}^n\in \mathbb{R}_+^n$, $n\in\mathbb{N}$, with $\sum_{i=1}^n\pi_i=1$, we construct a $Q$-matrix, which is a generator of a continuous-time Markov jumping process:
\begin{equation}\label{Q}
 Q_{ij}= \left\{\begin{aligned}
 & \frac{\pi_j}{1-\min_{k\in I}\pi_k},\hspace{1cm}\textrm{if $j\neq i$}; \\
 & -\frac{1-\pi_i}{1-\min_{k\in I}\pi_k},\hspace{0.5cm}\textrm{if $j=i$}.
 \end{aligned}\right.
\end{equation}
The following convergence result for $Q$-matrix \eqref{Q} induced Markov jumping process holds.  
\begin{theorem}[Informal]
Denote $p(t)=(p_i(t))_{i=1}^n\in\mathbb{R}^n_+$, $t\geq 0$, as the probability distribution of Markov jumping process from $Q$-matrix \eqref{Q}. In other words, $p(t)$ satisfies the Kolomogrov forward equation:
\begin{equation*}
  \frac{dp_i(t)}{dt}=\sum_{j=1}^n \Big[Q_{ji}p_j(t)-Q_{ij}p_i(t)\Big],   
\end{equation*}
with an initial distribution $p(0)\in\mathbb{R}_+^n$, $\sum_{i=1}^np_i(0)=1$. 
Then 
\begin{equation*}
  \sum_{i=1}^n|p_i(t)-\pi_i|\leq C e^{-\kappa t}, 
\end{equation*}
where $C=\sqrt{2\sum_{i=1}^n p_i(0)\log\frac{p_i(0)}{\pi_i}}>0$. And $\kappa>0$ is a constant satisfying 
\begin{equation*}
    \kappa\geq \frac{1}{1-\min_{k\in I}\pi_k}\cdot \min_{i,j\in I}\left(1-\frac{1}{2}(\sqrt{\pi_i}-\sqrt{\pi_j})^2
    \right)\geq \frac{1}{2}. 
    \end{equation*}
\end{theorem}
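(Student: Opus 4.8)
\medskip
\noindent\emph{Proof proposal.} The plan is to exploit the transport-information (Onsager) gradient-flow structure of the process, bound from below the Hessian of the relative entropy along this flow --- the entropic Ricci curvature of $Q$ --- and then pass from exponential decay of the entropy to the claimed $L^1$ estimate via Pinsker's inequality.

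First I would record the structural facts that make the framework applicable. The matrix \eqref{Q} has vanishing row sums and nonnegative off-diagonal entries, so it generates a continuous-time Markov jump process, and it satisfies detailed balance with respect to $\pi$, since $\pi_i Q_{ij}=\tfrac{\pi_i\pi_j}{1-\min_k\pi_k}=\pi_j Q_{ji}$. Reversibility makes the Kolmogorov forward equation the gradient flow of $\mathcal D(p):=\sum_i p_i\log(p_i/\pi_i)$ in the transport-type metric $g_p$ associated to the symmetric weights $w_{ij}:=\pi_i Q_{ij}$ and the logarithmic mean. Writing $\kappa_\star$ for the entropic Ricci curvature lower bound of $Q$ --- the largest constant with $\mathrm{Hess}\,\mathcal D(p)\succeq\kappa_\star\,g_p$ on the interior of the simplex, equivalently the smallest eigenvalue of the mean-field information Hessian matrix --- the convex-analysis framework recalled above yields $\mathcal D(p(t))\le e^{-2\kappa_\star t}\,\mathcal D(p(0))$ for all admissible initial data.

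The heart of the argument is the lower bound
\[
\kappa_\star\;\ge\;\frac{1}{1-\min_{k\in I}\pi_k}\cdot\min_{i,j\in I}\Bigl(1-\tfrac12(\sqrt{\pi_i}-\sqrt{\pi_j})^2\Bigr).
\]
For this I would substitute the special weights $w_{ij}=\pi_i\pi_j/(1-\min_k\pi_k)$ into the explicit formula for $\mathrm{Hess}\,\mathcal D$ (the Erbar--Maas / Bochner-type tensor associated to this framework). The scalar $(1-\min_k\pi_k)^{-1}$ factors out of every term, and the multiplicative structure $w_{ij}\propto\pi_i\pi_j$ decouples the resulting quadratic-form inequality so that it can be verified ``edge by edge''; the infimum defining $\kappa_\star$ should then be attained at a two-state configuration depending only on $\pi_i$ and $\pi_j$, producing precisely the factor $1-\tfrac12(\sqrt{\pi_i}-\sqrt{\pi_j})^2$. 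Making this reduction rigorous --- carefully handling the logarithmic-mean terms in the Hessian and confirming that the pairwise configuration is indeed extremal --- is where I expect the main difficulty to lie, and where the earlier structural results on the mean-field information Hessian are indispensable.

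Finally I would close with two elementary facts. For $a,b\in[0,1]$ one has $(\sqrt a-\sqrt b)^2\le\max(a,b)\le 1$, and since $\sum_k\pi_k=1$ every $\pi_i$ lies in $[0,1]$; hence $\min_{i,j}\bigl(1-\tfrac12(\sqrt{\pi_i}-\sqrt{\pi_j})^2\bigr)\ge\tfrac12$, and together with $(1-\min_k\pi_k)^{-1}\ge 1$ this forces $\kappa_\star\ge\tfrac12$, so any $\kappa$ not exceeding the displayed lower bound is admissible. Applying Pinsker's inequality $\|q-\pi\|_1\le\sqrt{2\,\mathcal D(q)}$ with $q=p(t)$ and combining with the entropy decay above gives
\[
\sum_{i=1}^n|p_i(t)-\pi_i|\;\le\;\sqrt{2\,\mathcal D(p(t))}\;\le\;\sqrt{2\,\mathcal D(p(0))}\,e^{-\kappa t}\;=\;C\,e^{-\kappa t},
\]
with $C=\sqrt{2\sum_{i=1}^n p_i(0)\log(p_i(0)/\pi_i)}$, which is the claim.
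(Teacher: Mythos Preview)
Your plan is correct and coincides with the paper's: bound the entropic Ricci curvature (the $\Gamma_2/\Gamma_1$ ratio) from below, deduce exponential decay of $\mathrm{D}_{\mathrm{KL}}$, then apply Pinsker. At the step you flag as the main difficulty, the paper uses the product weights $\omega_{ij}=c\,\pi_i\pi_j$ to obtain the closed form $a_{ij}/\theta_{ij}=c\bigl(1-\tfrac12(\pi_i+\pi_j)\bigr)+\tfrac12\,\theta_{ij}\,\partial^2_{\vec{ij}}E$, after which the edgewise bound reduces to the scalar inequality $\inf_{p_i,p_j}\theta_{ij}\,\partial^2_{\vec{ij}}E\ge 2\sqrt{\pi_i\pi_j}$ (their Lemma~1), giving exactly the factor $1-\tfrac12(\sqrt{\pi_i}-\sqrt{\pi_j})^2$.
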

We remark that if $n=2$, the $Q$-matrix \eqref{Q} is exactly the generator in Metropolis–Hastings algorithm on a two-point state. In other words, $Q_{12}=\min \Big\{1, \frac{\pi_2}{\pi_1}\Big\}$. 
When $n\geq 3$, the $Q$-matrix \eqref{Q} is different from the generator in Metropolis–Hastings algorithm; see Example \ref{three} in section \ref{sec5}. In literature, the convergence analysis of MCMC algorithms has been studied in \cite{WZ}. Compared to previous works, we develop a new MCMC sampling generator $Q$-matrix \eqref{Q}, which solves an inverse problem in optimizing convergence rates of finite state Markov processes. And the convergence rate is no less than one-half in $L_1$ distance, independent of the choices of any target distribution $\pi$. 
Our work is a natural step in transport information geometric convex analysis \cite{ LiHess, LiG, LiG1, LiLuGraph}. It is to design and compute fast convergence rate guaranteed MCMC algorithms. 

This paper is organized as follows. In section \ref{sec2}, we review some facts on the convergence analysis of reversible jumping Markov processes. The convergence rate is derived by the ``Ricci curvature lower bound'' based on the smallest eigenvalue of the Hessian matrix of Lyapunov functionals in terms of $\phi$-divergences (also named relative entropies or free energies). We also present the optimization problems of Ricci curvature lower bounds, which are to design the reversible Markov jumping process for sampling a target distribution. Using this optimality condition, we construct a Markov jumping process with the generator $Q$-matrix \eqref{Q}. Section \ref{sec3} presents the main result. We prove the global-in-time exponential convergence result for the constructed Markov jumping process with an analytical ``optimal convergence rate''. Proofs are given in section \ref{sec4}. We also present several examples of $Q$-matrices in two-point and three-point states in section \ref{sec5}. Numerical experiments in $250$, $500$, $1000$, and $2000$ states verify that the proposed algorithm converges faster than the Metropolis–Hastings algorithm.

\section{Optimal Ricci Curvature problems for finite state MCMC methods}\label{sec2}
In this section, we present the motivations of this paper. We first briefly review reversible Markov jumping processes and their constructions for continuous-time MCMC methods. We next present a constant, which determines convergence rates of Markov jumping processes. The constant is derived from the smallest eigenvalue (Ricci curvature lower bounds) of Hessian matrices of $\phi$-divergences in optimal transport-type metric spaces. The above two steps are ``forward problems'' to formulate an MCMC method and derive its convergence rate for a given target distribution. We last present an inverse problem. This is a class of optimization problems for designing Markov jumping processes with the largest convergence rates. By solving a ``local'' convergence rate problem, we derive the $Q$-matrix \eqref{Q}. 

\subsection{Reversible MCMC methods and symmetric weighted graphs}
In this subsection, we design an MCMC method using a symmetric weighted graph. It is a reversible Markov jumping process whose stationary distribution is a given target distribution. Suppose that there is a target probability distribution function satisfying
 \begin{equation*}
    \pi=(\pi_i)_{i=1}^n\in\mathbb{R}_+^n,\qquad \sum_{i=1}^n\pi_i=1.   
 \end{equation*}
We construct a Markov jumping process with a generator $Q$-matrix, whose stationary distribution satisfies $\pi$. 
\begin{definition}
Consider a symmetric weighted graph with self-loops $(I, \omega, E)$, where $I=\{1,\cdots, n\}$ is a vertex set, $\omega=(\omega_{ij})_{1\leq i,j\leq n}\in\mathbb{R}_+^{n\times n}$ is a symmetric weight matrix satisfying 
 \begin{equation*}
    \omega_{ij}=\omega_{ji}\geq 0, \quad \omega_{ii}\geq 0,  
 \end{equation*}
and $E=\{(i,j)\colon \omega_{ij}>0\}$ is an edge set.
Assume that 
\begin{equation*}
   \sum_{j=1}^n\omega_{ij}=\pi_i, \quad \textrm{for any $i\in I$}. 
\end{equation*}
Define a matrix $Q\in\mathbb{R}^{n\times n}$, such that
\begin{equation}\label{GQ}
Q_{ij}=\left\{\begin{aligned}
&\frac{\omega_{ij}}{\pi_i}, \hspace{2cm} \textrm{for $j\neq i$;}  \\
&-\sum_{k=1, k\neq i}^n \frac{\omega_{ik}}{\pi_i},\hspace{0.5cm} \textrm{for $j=i$}.
   \end{aligned}\right.
\end{equation}
\end{definition}
The $Q$-matrix \eqref{GQ} is a generator of a continuous-time reversible Markov chain on a finite state $\big\{1,2,\cdots, n\big\}$. In other words, the $Q$-matrix satisfies the row zero condition:
\begin{equation*}
Q_{ij}\geq 0, \quad \textrm{for $j\neq i$},\quad Q_{ii}=-\sum_{j=1,j\neq i}^nQ_{ij}.
\end{equation*}
The continuous-time Markov chain is reversible since the detailed balance relation holds:
\begin{equation}\label{db}
Q_{ij}\pi_i = Q_{ji} \pi_j=\omega_{ij}=\omega_{ji}.
\end{equation} 
And the Kolmogorov forward equation of the Markov process for the law $p_i(t)\in\mathbb{R}_+$, $i\in I$, satisfies
\begin{equation}\label{master}
\begin{split}
\frac{d p_i(t)}{d t}=\sum_{j=1}^n \Big[Q_{ji} p_j(t) - Q_{ij} p_i(t)\Big]
=\sum_{j=1}^n \omega_{ij}\Big[\frac{p_j(t)}{\pi_j}- \frac{p_i(t)}{\pi_i}\Big]. 
\end{split}
\end{equation}
 We note that $\pi$ is a stationary point of equation \eqref{master}. In other words, for any $i\in I$, 
 \begin{equation*}
\frac{d\pi_i}{dt}=\sum_{j=1}^n \Big[Q_{ji} \pi_j - Q_{ij} \pi_i\Big]=\sum_{j=1}^n \omega_{ij}\Big[\frac{\pi_j}{\pi_j}- \frac{\pi_i}{\pi_i}\Big] =0.
 \end{equation*}
\subsection{Lyapunov methods and convergence rates} 
 We next apply the Lyapunov method to study the convergence behavior of equation \eqref{master}. Define the $\phi$--divergence (relative entropy/free energy) on the finite state probability space: \begin{equation*}
 \mathrm{D}_{\phi}(p\|\pi) := \sum_{i=1}^n \phi\left (\frac{p_i}{\pi_i}\right) \pi_i, 
  \end{equation*} 
  where $\phi\in C^{2}(\mathbb{R}^1_+;\mathbb{R})$ is a convex function with $\phi(1)=0$ and $\phi'(1)=0$. Using the $\phi$-divergence as a Lyapunov function, equation \eqref{master} forms a gradient flow, known as the Onsager gradient flow \cite{ON}:
 \begin{equation}\label{ma1}
\begin{split}
  \frac{d p_i}{dt} =&\sum_{j=1}^n \omega_{ij}\Big[\frac{p_j}{\pi_j}- \frac{p_i}{\pi_i}\Big]\\
=&\sum_{j=1}^n \omega_{ij}\frac{\frac{p_j}{\pi_j}- \frac{p_i}{\pi_i}}{\phi'\left(\frac{p_j}{\pi_j}\right)- \phi'\left(\frac{p_i}{\pi_i}\right)}\Big[\phi'\left(\frac{p_j}{\pi_j}\right)- \phi'\left(\frac{p_i}{\pi_i}\right)\Big]\\ 
  =&\sum_{j=1}^n \theta_{ij}(\omega, p) \Big[\phi'\left(\frac{p_j}{\pi_j}\right)- \phi'\left(\frac{p_i}{\pi_i}\right)\Big]\\
   =&\sum_{j=1}^n \theta_{ij}(\omega, p) (\partial_{p_j}-\partial_{p_i})\mathrm{D}_\phi(p\|\pi),  
 \end{split}
 \end{equation}
 where we use the fact that $\partial_{p_i}\mathrm{D}_\phi(p\|\pi)=\phi'(\frac{p_i}{\pi_i})$ with 
\begin{equation*}
    \theta_{ij}(\omega, p):=\omega_{ij}\theta\left(\frac{p_i}{\pi_i}, \frac{p_j}{\pi_j}\right)\geq 0, \quad \textrm{and}\quad \theta(x, y) := \frac{x-y}{\phi'(x)- \phi'(y)}.
    \end{equation*}
Along the dynamics \eqref{master}, the $\phi$-divergence decays as follows:  
\begin{equation*}
\begin{split}
 \frac{d}{d t} \mathrm{D}_{\phi}(p(t)\|\pi) =& \sum_{i=1}^n\phi'\left(\frac{p_i(t)}{\pi_i}\right)\cdot \frac{dp_i(t)}{dt}\\
 =& \sum_{i=1}^n\phi'\left(\frac{p_i(t)}{\pi_i}\right)\sum_{j=1}^n \theta_{ij}(\omega, p(t)) \left[\phi'\left(\frac{p_j(t)}{\pi_j}\right)-\phi'\left(\frac{p_i(t)}{\pi_i}\right)\right]\\
 =& - \frac12 
 \sum_{i,j=1}^n \theta_{ij}(\omega, p(t)) \left[\phi'\left(\frac{p_j(t)}{\pi_j}\right)-\phi'\left(\frac{p_i(t)}{\pi_i}\right)\right]^2 \leq 0. 
\end{split}
 \end{equation*}

We then present the Hessian matrix of $\phi$-divergences along with dynamics \eqref{master}, using which we derive the convergence rate for equation \eqref{master}. The methods of computing Hessian matrices of Lyapunov functionals are known as Gamma calculus on graphs or mean-field information Hessian matrices; see \cite{LiLuGraph}. And the convergence rate is often named the ``generalized Ricci curvature lower bound'' or geodesic convexity in optimal transport-type metric spaces; see \cite{maas2011gradient, M1}. 
\begin{definition}
For any $f\in\mathbb{R}^n$, define a Gamma one operator: 
\begin{equation*}
    \Gamma_1(\omega, p,f,f)=\frac{1}{2}\sum_{i,j=1}^n (f_i-f_j)^2\theta_{ij}(\omega, p).
\end{equation*}
Define a Gamma two operator: 
\begin{equation*}
    \Gamma_2(\omega, p,f,f)=  \frac{1}{2} \sum_{i,j=1}^n (f_i-f_j)^2 a_{ij}(\omega, p), 
\end{equation*}
where 
\begin{equation*}
\begin{split}
a_{ij}(\omega, p):=&\frac{1}{2}\sum_{k=1}^n \Big[  
\frac{\partial \theta_{ij}}{\partial p_i} \eta_{ki}
+ \frac{\partial \eta_{ij}}{\partial p_i} \theta_{ki} 
+ \frac{\partial \eta_{jk}}{\partial p_j} \theta_{ij}
-\frac{\partial \eta_{ki}}{\partial p_k} \theta_{jk}\\
&\qquad-\frac{\partial \theta_{ij}}{\partial p_j} \eta_{jk}
- \frac{\partial \eta_{ij}}{\partial p_j} \theta_{jk} 
-\frac{\partial \eta_{ki}}{\partial p_i} \theta_{ij}
+\frac{\partial \eta_{jk}}{\partial p_k} \theta_{ki}\Big], 
\end{split}
\end{equation*}
with \begin{equation*}
    \eta_{ij}(\omega, p):=\theta_{ij}(\omega, p)(\partial_{p_j}-\partial_{p_i})\mathrm{D}_\phi(p\|\pi)=\omega_{ij}\left(\frac{p_j}{\pi_j}-\frac{p_i}{\pi_i}\right). 
\end{equation*}
\end{definition}

\begin{definition}[Convergence rate/Ricci curvature]
Define a largest possible scalar $\kappa(\omega, p)\in \mathbb{R}$, such that  
   \begin{equation*}
       \Gamma_2(\omega, p,f,f)\geq \kappa(\omega, p)\Gamma_1(\omega, p,f,f),
   \end{equation*}
   for any $f\in\mathbb{R}^n$. The following estimation holds: 
\begin{equation*}
 \kappa(\omega, p)\geq  \min_{(i,j)\in E}\frac{a_{ij}(\omega, p)}{\theta_{ij}(\omega, p)}.  
\end{equation*}
\end{definition}
 The following convergence result of $\phi$-divergences for dynamics \eqref{master} holds. 
\begin{corollary}[Convergence analysis \cite{LiLuGraph}]\label{CA}
Suppose that there exists a positive scalar $\kappa>0$, such that $\kappa(\omega, p)>\kappa>0$ for any $p\in\mathbb{R}_+^n$ with $\sum_{i=1}^np_i=1$, and $p(t)$ satisfies equation \eqref{master}, then 
\begin{equation*}
\mathrm{D}_{\phi}(p(t)\|\pi)\leq e^{-2\kappa t}\mathrm{D}_{\phi}(p(0)\|\pi),     
\end{equation*}
for any initial condition $p(0)\in\mathbb{R}_+^n$ with $\sum_{i=1}^np_i(0)=1$. 
\end{corollary}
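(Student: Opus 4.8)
The plan is to run the finite-state Bakry--Émery scheme: differentiate the $\phi$-divergence twice along the flow to connect $\tfrac{d^2}{dt^2}\mathrm{D}_\phi$ with $\Gamma_2$, convert the curvature hypothesis into a modified log-Sobolev inequality, and close with Grönwall's lemma. Write $H(t):=\mathrm{D}_\phi(p(t)\|\pi)$ and let $f(t)=(f_i(t))_{i=1}^n$ with $f_i(t)=\phi'(p_i(t)/\pi_i)=\partial_{p_i}\mathrm{D}_\phi(p(t)\|\pi)$. Since $\phi\in C^2$ and (for \eqref{master} with a connected edge set) $p(t)$ stays in the open simplex for $t>0$, the map $t\mapsto H(t)$ is $C^2$, and the decay computation already displayed in Section~\ref{sec2} rewrites as the first-derivative identity
\[
H'(t)=-\,\Gamma_1\big(\omega,p(t),f(t),f(t)\big)\le 0 ,
\]
because $(f_i-f_j)^2=[\phi'(p_i/\pi_i)-\phi'(p_j/\pi_j)]^2$.

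First I would establish the second-derivative identity $H''(t)=2\,\Gamma_2(\omega,p(t),f(t),f(t))$. Differentiating $H'(t)=-\Gamma_1(\omega,p(t),f(t),f(t))$ once more by the chain rule in the $p_k$-variables and substituting $\dot p_k=\sum_\ell\theta_{k\ell}(\omega,p)(\partial_{p_\ell}-\partial_{p_k})\mathrm{D}_\phi(p\|\pi)$ from \eqref{ma1}, the resulting sum over triples $(i,j,k)$ regroups, after symmetrization, into $\tfrac12\sum_{i,j}(f_i-f_j)^2a_{ij}(\omega,p)$; this is exactly how the coefficients $a_{ij}$ and the operator $\Gamma_2$ are engineered, and the identity is the finite-state Gamma-calculus computation of \cite{LiLuGraph}. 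Invoking the hypothesis $\kappa(\omega,p(t))>\kappa$ together with $\Gamma_1\ge0$ then yields
\[
H''(t)=2\,\Gamma_2\big(\omega,p(t),f(t),f(t)\big)\;\ge\;2\kappa\,\Gamma_1\big(\omega,p(t),f(t),f(t)\big)\;=\;-2\kappa\,H'(t).
\]

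Next I would deduce a modified log-Sobolev inequality. Fix a probability vector $p_0$ in the open simplex, let $p(\cdot)$ solve \eqref{master} with $p(0)=p_0$, and set $\Lambda(s):=\Gamma_1(\omega,p(s),f(s),f(s))=-H'(s)\ge0$. The previous display reads $\Lambda'(s)=-H''(s)\le-2\kappa\,\Lambda(s)$, so Grönwall gives $\Lambda(s)\le\Lambda(0)e^{-2\kappa s}$. Since $H$ is non-increasing and bounded below by $0$, the limit $H(\infty):=\lim_{s\to\infty}H(s)$ exists; moreover $\Gamma_1(\omega,p,\cdot,\cdot)$ vanishes (with $f=(\phi'(p_i/\pi_i))_i$) only when $p/\pi$ is constant on each connected component of $(I,E)$, which (for a connected graph, in particular for the one underlying \eqref{Q}) forces $p=\pi$, so a LaSalle-type argument gives $p(s)\to\pi$ and hence $H(\infty)=\mathrm{D}_\phi(\pi\|\pi)=0$. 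Integrating $\Lambda=-H'$ over $[0,\infty)$,
\[
\mathrm{D}_\phi(p_0\|\pi)=H(0)-H(\infty)=\int_0^\infty\Lambda(s)\,ds\le\frac{\Lambda(0)}{2\kappa}=\frac{1}{2\kappa}\,\Gamma_1\big(\omega,p_0,f(0),f(0)\big),
\]
and since $p_0$ was arbitrary this is the inequality $2\kappa\,\mathrm{D}_\phi(p\|\pi)\le\Gamma_1(\omega,p,f,f)$ with $f=(\partial_{p_i}\mathrm{D}_\phi(p\|\pi))_{i=1}^n$. Finally, applying this at the point $p(t)$ of the original trajectory gives $H'(t)=-\Gamma_1(\omega,p(t),f(t),f(t))\le-2\kappa\,H(t)$, so Grönwall yields $H(t)\le e^{-2\kappa t}H(0)$, which is the assertion.

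The only genuinely nontrivial step is the second-derivative identity $H''=2\Gamma_2$: one must verify that the explicit combination of partial derivatives of $\theta_{ij}$ and $\eta_{ij}$ in $a_{ij}$ is precisely what emerges from differentiating $\Gamma_1$ along \eqref{master} and symmetrizing, which requires $\theta(x,y)=(x-y)/(\phi'(x)-\phi'(y))$ (extended by $1/\phi''(x)$ on the diagonal) to be $C^1$ — guaranteed by $\phi\in C^2$ with $\phi$ strictly convex. A secondary point is the convergence $p(t)\to\pi$ used to pin down $H(\infty)=0$, which needs connectedness of the edge set (automatic for \eqref{Q}). Both ingredients are supplied by \cite{LiLuGraph}, so the corollary is essentially this Gamma-calculus identity packaged with two applications of Grönwall's lemma.
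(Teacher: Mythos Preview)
Your proof is correct and follows essentially the same Bakry--\'Emery route as the paper: both invoke the identities $H'=-\Gamma_1$ and $H''=2\Gamma_2$ from \cite{LiLuGraph}, use the curvature hypothesis to get $H''\ge -2\kappa H'$, integrate over $[t,\infty)$ (using $\phi(1)=\phi'(1)=0$ to make the boundary terms vanish), and close with Gr\"onwall. Your only cosmetic difference is that you package the integration step as an explicit modified log-Sobolev inequality $2\kappa\,\mathrm{D}_\phi(p\|\pi)\le\Gamma_1(\omega,p,f,f)$ before applying it at $p(t)$, whereas the paper integrates directly; the content is identical.
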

\begin{proof}
From Lemma 5 in \cite{LiLuGraph}, we have 
\begin{equation*}
\frac{d}{dt}\mathrm{D}_\phi(p(t)\|\pi)=-\Gamma_1(\omega, p(t), f(t),f(t))|_{f(t)=\phi'(\frac{p(t)}{\pi})}, 
\end{equation*}
and 
\begin{equation*}
    \frac{d^2}{dt^2}\mathrm{D}_\phi(p(t)\|\pi)=2\Gamma_2(\omega, p(t), f(t),f(t))|_{f(t)=\phi'(\frac{p(t)}{\pi})}. 
\end{equation*}
From the definition of constant $\kappa$, we have
\begin{equation*}
\frac{d^2}{dt^2}\mathrm{D}_\phi(p(t)\|\pi)\geq -2\kappa\frac{d}{dt}\mathrm{D}_\phi(p(t)\|\pi).  
\end{equation*}
Integrating in a time domain $[t, \infty)$ with $t>0$ and using the fact that $\phi(1)=0$, $\phi'(1)=0$, we have
\begin{equation*}
\frac{d}{dt}\mathrm{D}_\phi(p(t)\|\pi)\leq -2\kappa \mathrm{D}_\phi(p(t)\|\pi).    
\end{equation*}
Following Grownwall's inequality, we prove the exponential convergence result for dynamics \eqref{master}. 
\end{proof}
\subsection{Optimal Ricci curvature problems}
We are ready to present the optimal Ricci curvature problem. We design a weighted matrix function $\omega\in\mathbb{R}^{n\times n}_+$, which maximizes the convergence rate $\kappa$ of equation \eqref{master}. 

We propose the following optimization problem, which maximizes the global-in-time convergence rate of dynamics \eqref{master}. 
\begin{definition}[Optimal global convergence rate problem]
Consider the following minimax problem: 
\begin{equation}\label{minRic}
    \max_{\omega\in \mathbb{R}_+^{n\times n}}\min_{p\in \mathbb{R}^n_+}~~{\kappa}(\omega, p),
\end{equation}
where the maximization is over all possible weight matrix $\omega\in\mathbb{R}_+^{n\times n}$ and the minimization is over all discrete probability function $p\in\mathbb{R}_+^n$, such that 
\begin{equation*}
\begin{aligned}
    &\sum_{i=1}^np_i=1,\quad p_i\geq 0, \quad i\in I,\\
   &\sum_{j=1}^n\omega_{ij}=\pi_i, \quad \omega_{ij}=\omega_{ji}\geq 0, \quad \omega_{ii}\geq 0, \quad i,j\in I. 
   \end{aligned}
\end{equation*}
\end{definition}
Solving the minimax problem \eqref{minRic} provides an ``optimal'' weight matrix function in which the probability density of MCMC methods converges to the target distribution at a desirable maximal global-in-time rate. In general, deriving an analytical optimality condition from the minimax problem \eqref{minRic} is not a simple task.

From now on, we propose an alternative approach. We obtain a simple variational problem, which only maximizes the local convergence rate of dynamics \eqref{master} near the stationary distribution. In other words, we maximizes the convergence rate $\frac{a_{ij}(\omega, p)}{\theta_{ij}(\omega, p)}$ at the stationary distribution $p=\pi$. 
\begin{definition}[Local optimal convergence rate problem]
Consider the following maximization problem:
\begin{equation}\label{relax}
    \max_{\omega\in \mathbb{R}_+^{n\times n}}\min_{(i,j)\in E}~~\frac{a_{ij}(\omega, \pi)}{\theta_{ij}(\omega, \pi)},
\end{equation}
where the maximization is over all possible weight matrix $\omega\in\mathbb{R}_+^{n\times n}$, such that
\begin{equation*}
\begin{aligned}
   &\sum_{j=1}^n\omega_{ij}=\pi_i, \quad \omega_{ij}=\omega_{ji}\geq 0, \quad \omega_{ii}\geq 0, \quad i,j\in I. 
   \end{aligned}
\end{equation*}
\end{definition}
{We next derive a class of weight functions in solving the critical point of variation problem \eqref{relax}. We present it below.} 
\begin{definition}
Denote $\omega^*\in\mathbb{R}_+^{n\times n}$, satisfies the following conditions:
There exists a constant $c>0$, such that 
\begin{equation*}
  c=\frac{1}{1-\min_{k\in I}\pi_k},
\end{equation*}
and 
\begin{equation*}
   \omega^*_{ij}=\left\{
   \begin{aligned}
   &c\pi_i\pi_j, \hspace{2.5cm} \textrm{for $i\neq j$;}  \\
& (1-c)\pi_i+c\pi_i^2, \hspace{1cm} \textrm{for $i=j$.}
\end{aligned}\right.
\end{equation*}
In this case, the Q-matrix \eqref{GQ} satisfies 
\begin{equation*}
Q_{ij}=\left\{\begin{aligned}
&c\pi_j, \hspace{2cm} \textrm{for $j\neq i$;}  \\
&-c(1-\pi_i),\hspace{0.7cm} \textrm{for $j=i$}.
   \end{aligned}\right.
\end{equation*}
This is exactly the $Q$-matrix \eqref{Q}. 
\end{definition}
\begin{proof}[Derivation of the weight matrix:]
We first prove the following claim. 

\noindent\textbf{Claim:}
The following identity holds:
\begin{equation*}
   a_{ij}(\omega, \pi)=\omega_{ij}\Big[\frac{\sum_{k\neq i}\omega_{ik}}{\pi_i}+ \frac{\sum_{k\neq j}\omega_{jk}}{\pi_j}\Big]-\sum_{k\neq i, k\neq j}\frac{\omega_{ik}\omega_{jk}}{\pi_k}. 
\end{equation*}
\begin{proof}[Proof of Claim \ref{sec2}]
We recall that $\Gamma_2(p,f,f)$ can be written as a quadratic form ${\frac{1}{2}}\sum_{i,j=1}^n a_{ij}(\omega, p)(f_i-f_j)^2$. We note that $\frac{\partial \eta_{ij}}{\partial p_i}=\frac{\omega_{ij}}{\pi_i}$ and $\frac{\partial \eta_{ij}}{\partial p_j}=-\frac{\omega_{ij}}{\pi_j}$. Thus 
\begin{align*}
a_{ij}(\omega, p)=&\frac{1}{2}\sum_{k=1}^n \Big(  
\frac{\partial \theta_{ij}}{\partial p_i} \eta_{ki}
+ \frac{\partial \eta_{ij}}{\partial p_i} \theta_{ki} 
+ \frac{\partial \eta_{jk}}{\partial p_j} \theta_{ij}
-\frac{\partial \eta_{ki}}{\partial p_k} \theta_{jk}\\
&\hspace{1cm}-\frac{\partial \theta_{ij}}{\partial p_j} \eta_{jk}
- \frac{\partial \eta_{ij}}{\partial p_j} \theta_{jk} 
-\frac{\partial \eta_{ki}}{\partial p_i} \theta_{ij}
+\frac{\partial \eta_{jk}}{\partial p_k} \theta_{ki}\Big)\\
=&-\frac{1}{2} \Big( \frac{\partial \theta_{ij}}{\partial p_i}
-\frac{\partial \theta_{ij}}{\partial p_j}\Big)
\eta_{ij} +  \Big( \frac{\partial \eta_{ij}}{\partial p_i}
-\frac{\partial \eta_{ij}}{\partial p_j}\Big)
\theta_{ij}\\
&+\frac{1}{2}\sum_{k\not = i,j}
\Big(
\frac{\partial \eta_{ij}}{\partial p_i} \theta_{ki} 
+ \frac{\partial \eta_{jk}}{\partial p_j} \theta_{ij}
-\frac{\partial \eta_{ki}}{\partial p_k} \theta_{jk}\\
&\hspace{1.5cm}
- \frac{\partial \eta_{ij}}{\partial p_j} \theta_{jk} 
-\frac{\partial \eta_{ki}}{\partial p_i} \theta_{ij}
+\frac{\partial \eta_{jk}}{\partial p_k} \theta_{ki}\Big).
\end{align*}
Note that when $p=\pi$, we have 
\begin{equation*}
  \frac{\partial\theta_{ij}(\omega ,p)}{\partial p_i}\eta_{ki}(\omega ,p)|_{p=\pi}=0,\quad \theta_{ij}(\omega, \pi)=\omega_{ij}, \quad \textrm{for any $i,j,k\in I$}.
\end{equation*}
Hence we obtain $a_{ij}(\omega, \pi)$ below:
\begin{align*}
a_{ij}(\omega, \pi)&=\omega_{ij}^2 \Big(\frac{1}{\pi_i}+\frac{1}{\pi_j}\Big)
 +\sum_{k\not = i,j}
\Big(\frac{\omega_{ij}\omega_{ik}}{\pi_i}
+ \frac{\omega_{ij}\omega_{jk}}{\pi_j}
-\frac{\omega_{ik}\omega_{jk}}{\pi_k}\Big).
\end{align*}
This finishes the proof. 
\end{proof}
From the Claim, we define 
\begin{equation*}
 F_{ij}(\omega):=\frac{ a_{ij}(\omega, \pi)}{\theta_{ij}(\omega, \pi)}= \frac{\sum_{k\neq i}\omega_{ik}}{\pi_i}+ \frac{\sum_{k\neq j}\omega_{jk}}{\pi_j}-\sum_{k\neq i, k\neq j}\frac{\omega_{ik}\omega_{jk}}{\pi_k\omega_{ij}}.
\end{equation*}
For $i,j,k\in I$, with $i\neq j$, $i\neq k$, we let
\begin{equation*}
  \frac{\partial}{\partial \omega_{ik}} F_{ij}(\omega)=\frac{1}{\pi_i}-\frac{\omega_{jk}}{\pi_k\omega_{ij}}=0. 
\end{equation*}
This implies 
\begin{equation}\label{eq}
   \frac{\pi_k}{\pi_i}=\frac{\omega_{jk}}{\omega_{ij}}, \quad \textrm{for any $i\neq j$, $i\neq k$, $i,j,k\in I$.}
\end{equation}
One solution of equation \eqref{eq} satisfies
\begin{equation*}
  \omega^*_{ij}=\omega_{ij}=c\pi_i\pi_j, \quad \textrm{for $i\neq j$}, 
\end{equation*}
where $c$ is a constant. And the constant $c$ is the solution of the following optimization problem:
\begin{equation*}
\max_{c\in \mathbb{R}_+}~~c \quad \textrm{s.t.}\quad
  c\geq 0, \quad (1-c)\pi_i+ c\pi_i^2\geq 0,\quad \textrm{for $i\in I$}.  
\end{equation*}
We have 
\begin{equation*}
  c\leq \frac{1}{1-\pi_i}, \quad \textrm{for any $i\in I$}.   
\end{equation*}
Thus, the optimal constant satisfies $c=\min_{k\in I}\{\frac{1}{1-\pi_k}\}$, which finishes the derivation. 
\end{proof}

\begin{remark}
We remark that both variational problems \eqref{minRic} and \eqref{relax} have closed-form critical points on a two-point space. One can show that it is to find optimal choices of weight matrices:
\begin{equation*}
   \max_{\omega_{12}}~~\Big\{\omega_{12} \colon\quad \omega_{11}+\omega_{12}=\pi_1,\quad \omega_{21}+\omega_{22}=\pi_2, \quad \omega_{ij}\geq 0, \quad i,j\in \{1,2\}\Big\}. 
\end{equation*}
Hence the optimal weight function satisfies $\omega^*_{12}=\min\{\pi_1, \pi_2\}$. Thus, 
$Q_{12}=\min\{1, \frac{\pi_2}{\pi_1}\}$, which is the generator in the Metropolis-Hastings algorithm on a two-point state. In other words, the Metropolis-Hastings algorithm on a two-point state maximizes the Ricci curvature/convergence rate. However, this is not the case when $n\geq 3$. Later on, in section \ref{sec5}, we show that the $Q$-matrix \eqref{Q} for $n=3$ is different from the generator in the Metropolis-Hastings algorithm.   
\end{remark}
\section{Main results}\label{sec3}
In this section, we always consider the $Q$-matrix \eqref{Q} and its associated Kolmogorov forward equation \eqref{master}. We use a $\phi$-divergence as the Lyapunov function and study the exponential convergence result of dynamics \eqref{master}. 
\begin{theorem}\label{thm2}
Denote $p(t)=(p_i(t))_{i=1}^n\in\mathbb{R}^n_+$, $t\geq 0$ satisfying the Kolmogorov forward equation \eqref{master}. We assume that $\phi\in C^{2}(\mathbb{R}_+;\mathbb{R})$, $\phi(x)$ is convex w.r.t. $x$ with $\phi''(x)\geq 0$, and $\phi(1)=\phi'(1)=0$. Then $\phi$-divergence converges to zero exponentially fast in time:
\begin{equation*}
\mathrm{D}_\phi(p(t)\|\pi)\leq e^{-2\kappa t}\mathrm{D}_\phi(p(0)\|\pi),
\end{equation*}
for any initial condition $p(0)\in\mathbb{R}_+^n$ with $\sum_{i=1}^np_i(0)=1$. And the global-in-time convergence rate $\kappa>0$ is a constant, satisfying 
\begin{equation*}
    \kappa= \frac{1}{1-\min_{k\in I}\pi_k}\cdot \min_{i,j\in I}\left(1-\frac{1}{2}(\pi_i+\pi_j) +\frac{1}{2} 
    \xi_{\phi}(\pi_i,\pi_j)\right).
    \end{equation*}
Here $\xi_\phi(\pi_i,\pi_j)$ is a positive two-variable function defined as
$$\xi_\phi(\pi_i,\pi_j)
=\inf_{(p_i,p_j)\in [0,1]^2}~\left(\phi''\left(\frac{p_i}{\pi_i}\right)\pi_j+\phi''\left(\frac{p_j}{\pi_j}\right)\pi_i\right)\cdot\frac{\frac{p_i}{\pi_i}-\frac{p_j}{\pi_j}}{\phi'(\frac{p_i}{\pi_i})-\phi'(\frac{p_j}{\pi_j})}\geq 0.
$$
In particular, the global-in-time convergence rate $\kappa$ is bounded below by $\frac{1}{2}$: 
\begin{equation*}
 \kappa\geq \frac{1}{2}.    
\end{equation*}
\end{theorem}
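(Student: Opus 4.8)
The plan is to reduce everything to Corollary~\ref{CA}: it suffices to exhibit a constant $\kappa>0$ with $\kappa(\omega^*,p)\geq\kappa$ for \emph{every} probability vector $p$ on the simplex, where $\omega^*$ is the weight matrix underlying the $Q$-matrix \eqref{Q}, i.e.\ $\omega^*_{ij}=c\pi_i\pi_j$ for $i\neq j$ with $c=(1-\min_{k\in I}\pi_k)^{-1}$. Using the bound $\kappa(\omega^*,p)\geq\min_{(i,j)\in E}\frac{a_{ij}(\omega^*,p)}{\theta_{ij}(\omega^*,p)}$ coming from the definition of the convergence rate, the theorem follows from a lower estimate on $\frac{a_{ij}(\omega^*,p)}{\theta_{ij}(\omega^*,p)}$ that is uniform in $p$ and in the edge $(i,j)$. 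Here $E$ is the complete graph (all $\omega^*_{ij}>0$), so $\theta_{ij}(\omega^*,p)=c\pi_i\pi_j\,\theta(u_i,u_j)>0$ and each quotient is well defined, with $u_\ell:=p_\ell/\pi_\ell$ and $\theta(x,y)=\frac{x-y}{\phi'(x)-\phi'(y)}$.

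First I would compute $a_{ij}(\omega^*,p)$ for a general $p$, extending the Claim in Section~\ref{sec2}, which only evaluates $a_{ij}(\omega,\pi)$. Starting from the intermediate expression for $a_{ij}(\omega,p)$ derived there, the new features at $p\neq\pi$ are that the terms carrying $\partial_{p_i}\theta_{ij},\partial_{p_j}\theta_{ij}$ survive (at $p=\pi$ they vanish only because the accompanying $\eta$-factors do), and that the ``off-pair'' terms carrying $\theta_{ki},\theta_{jk}$ with $k\neq i,j$ no longer collapse. The crucial point is that the product structure makes $\omega^*_{ab}/\pi_a=c\pi_b$ depend only on $b$, so the coefficients multiplying $\theta_{ki}$ and $\theta_{jk}$---differences such as $\omega^*_{ij}/\pi_i-\omega^*_{jk}/\pi_k$ and $\omega^*_{ki}/\pi_k-\omega^*_{ij}/\pi_j$---vanish identically; hence $a_{ij}(\omega^*,p)$ depends only on $(p_i,p_j)$. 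Inserting $\omega^*$ into the surviving terms and simplifying with the elementary identity $(x-y)\,\partial_x\theta(x,y)=\theta(x,y)-\phi''(x)\,\theta(x,y)^2$, I expect
\begin{equation*}
\frac{a_{ij}(\omega^*,p)}{\theta_{ij}(\omega^*,p)}
= c\left(1-\tfrac12(\pi_i+\pi_j)+\tfrac12\big(\pi_j\,\phi''(u_i)+\pi_i\,\phi''(u_j)\big)\,\theta(u_i,u_j)\right),
\qquad u_\ell=\frac{p_\ell}{\pi_\ell}.
\end{equation*}
As a check, at $p=\pi$ one has $\theta(1,1)=1/\phi''(1)$, the bracket collapses to $1$, and the quotient equals $c$, matching the value read off from the Claim. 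This step---correct sign bookkeeping through the eight terms in the definition of $a_{ij}(\omega,p)$, the decoupling cancellation special to $\omega^*$, and the algebraic reduction to the closed form---is the main obstacle; the remaining steps are short.

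Next I would pass to infima. On the simplex $(p_i,p_j)$ ranges over a subset of $[0,1]^2$, so enlarging the domain of the infimum to all of $[0,1]^2$ only decreases it; this turns $\inf_p$ of the last term into exactly $\tfrac12\,\xi_\phi(\pi_i,\pi_j)$, and then $\min_{(i,j)\in E}$ gives $\kappa(\omega^*,p)\geq c\,\min_{i,j}\big(1-\tfrac12(\pi_i+\pi_j)+\tfrac12\,\xi_\phi(\pi_i,\pi_j)\big)=\kappa$ uniformly in $p$; Corollary~\ref{CA} then yields $\mathrm{D}_\phi(p(t)\|\pi)\leq e^{-2\kappa t}\mathrm{D}_\phi(p(0)\|\pi)$. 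The nonnegativity $\xi_\phi\geq0$ is immediate: $\phi''\geq0$ by convexity and $\theta(x,y)=\frac{x-y}{\phi'(x)-\phi'(y)}\geq0$ since $\phi'$ is nondecreasing, so each integrand in the defining infimum is a product of nonnegative numbers, with the degenerate case $u_i=u_j$ read through $\theta(x,x)=1/\phi''(x)$.

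Finally, the bound $\kappa\geq\tfrac12$ is elementary: $c=(1-\min_k\pi_k)^{-1}\geq1$ because $0<1-\min_k\pi_k\leq1$; for every pair $\pi_i+\pi_j\leq\sum_k\pi_k=1$, so $1-\tfrac12(\pi_i+\pi_j)\geq\tfrac12$; combined with $\xi_\phi\geq0$ every term in the minimum is at least $\tfrac12$, whence $\kappa\geq c\cdot\tfrac12\geq\tfrac12$.
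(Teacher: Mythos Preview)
Your proposal follows essentially the same route as the paper's own proof: compute $a_{ij}/\theta_{ij}$ explicitly for the product weight $\omega^*_{ij}=c\pi_i\pi_j$, exploit the cancellation in the off-pair sum (the paper phrases it as the six cross-terms collapsing to $2c\pi_k\theta_{ij}$, you phrase it as the coefficients of $\theta_{ki}$ and $\theta_{jk}$ vanishing because $\omega^*_{ab}/\pi_a$ depends only on $b$ --- same content), arrive at the closed form
\[
\frac{a_{ij}}{\theta_{ij}}=c\Big(1-\tfrac12(\pi_i+\pi_j)\Big)+\tfrac12\,\theta_{ij}\,\partial^2_{\vec{ij}}E,
\]
bound the last term from below, and invoke Corollary~\ref{CA}. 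Your handling of the $\xi_\phi$ term --- rewriting $\tfrac12\theta_{ij}\partial^2_{\vec{ij}}E$ as $\tfrac{c}{2}(\pi_j\phi''(u_i)+\pi_i\phi''(u_j))\theta(u_i,u_j)$ and taking the infimum over $(p_i,p_j)\in[0,1]^2$ --- is in fact slightly more explicit than the paper's written argument, which simply drops that nonnegative term to reach $\kappa\geq\tfrac12$ without spelling out the intermediate $\xi_\phi$ formula.
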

From now on, we consider the $\phi$-divergence as the alpha-divergence, see \cite{CA}. I.e., 
\begin{equation}\label{alpha}
\phi(x)=\left\{\begin{aligned}
&\frac{x^\alpha-1-\alpha(x-1)}{\alpha(\alpha-1)}, \hspace{1cm}\alpha\neq 0, 1;\\
& 1-x+x\log x, \hspace{1.6cm} \alpha=0;\\
&x-1-\log x, \hspace{1.9cm} \alpha=1. 
\end{aligned}\right.
\end{equation}
In the above formula, $\log$ is the natural logarithm function. In this case, we provide a detailed analysis of the two-variable functions $\xi_\phi$ and the global-in-time convergence rate $\kappa$. 
\begin{proposition}\label{prop2}
Suppose $\phi$ is defined in \eqref{alpha}. If $\alpha\in [0,2]$, then 
\begin{equation*}
     \xi_\phi(\pi_i,\pi_j)\geq 2\sqrt{\pi_i \pi_j},
\end{equation*}
and the global-in-time convergence rate $\kappa$ satisfies 
\begin{equation*}
   \kappa\geq \frac{1}{1-\min_{k\in I}\pi_k}\cdot \min_{i,j\in I}\left(1-\frac{1}{2}(\sqrt{\pi_i}-\sqrt{\pi_j})^2\right). 
\end{equation*}
\end{proposition}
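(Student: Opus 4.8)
The plan is to reduce the claimed bound $\xi_\phi(\pi_i,\pi_j)\ge 2\sqrt{\pi_i\pi_j}$ to a pointwise inequality, verify that inequality for the alpha-divergence \eqref{alpha} by an explicit computation, and then read off the bound on $\kappa$ from Theorem~\ref{thm2}.

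First I would set $x=p_i/\pi_i$, $y=p_j/\pi_j$ and recall that the last factor in the definition of $\xi_\phi$ is $\theta(x,y)=\frac{x-y}{\phi'(x)-\phi'(y)}\ge 0$. By the AM--GM inequality, $\phi''(x)\pi_j+\phi''(y)\pi_i\ge 2\sqrt{\pi_i\pi_j}\,\sqrt{\phi''(x)\phi''(y)}$, so it suffices to prove the pointwise estimate $\sqrt{\phi''(x)\phi''(y)}\,\theta(x,y)\ge 1$ for all $x,y>0$; taking the infimum over $(p_i,p_j)$ then gives the claim. Since $\tfrac{1}{\theta(x,y)}=\frac{\phi'(x)-\phi'(y)}{x-y}=\frac{1}{x-y}\int_y^x\phi''(s)\,ds$, this pointwise estimate says precisely that the average of $\phi''$ over $[y,x]$ is at most the geometric mean $\sqrt{\phi''(x)\phi''(y)}$ of its endpoint values.

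Next I would verify this for $\phi$ as in \eqref{alpha}. For $\alpha\ne 0,1$ one has $\phi''(s)=s^{\alpha-2}$, so $\frac{1}{x-y}\int_y^x s^{\alpha-2}\,ds=\frac{x^{\alpha-1}-y^{\alpha-1}}{(\alpha-1)(x-y)}$, and the inequality to prove is $\frac{x^{\alpha-1}-y^{\alpha-1}}{(\alpha-1)(x-y)}\le(xy)^{(\alpha-2)/2}$. The key move is the substitution $x=e^{m+h}$, $y=e^{m-h}$ with $h>0$: the common factor $e^{(\alpha-2)m}$ drops out of both sides and the inequality collapses to the one-variable statement $\sinh(\gamma h)\le\gamma\sinh h$ with $\gamma:=|\alpha-1|\in[0,1]$ (using that $\sinh$ is odd to absorb the sign of $\alpha-1$). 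This holds because, for fixed $h\ge 0$, the function $g(\gamma):=\gamma\sinh h-\sinh(\gamma h)$ has $g(0)=g(1)=0$ and $g''(\gamma)=-h^2\sinh(\gamma h)\le 0$, so $g$ is concave on $[0,1]$ and hence nonnegative there. The two Kullback--Leibler cases are handled identically: for $\phi(x)=1-x+x\log x$ the inequality reduces to $h\le\sinh h$ (clear from $\frac{d}{dh}(\sinh h-h)=\cosh h-1\ge 0$), and for $\phi(x)=x-1-\log x$ the product $\sqrt{\phi''(x)\phi''(y)}\,\theta(x,y)$ equals $1$ identically. The boundary points $p_i=0$ or $p_j=0$ do not affect the infimum, since there the expression is either $+\infty$ (as $\phi''$ blows up for $\alpha<2$) or a limit of interior values already bounded below by $2\sqrt{\pi_i\pi_j}$.

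Finally, substituting $\xi_\phi(\pi_i,\pi_j)\ge 2\sqrt{\pi_i\pi_j}$ into the formula for $\kappa$ in Theorem~\ref{thm2} and using $1-\tfrac12(\pi_i+\pi_j)+\sqrt{\pi_i\pi_j}=1-\tfrac12(\sqrt{\pi_i}-\sqrt{\pi_j})^2$ gives the stated lower bound on $\kappa$. I expect the main obstacle to be the second step: recognizing that the exponential change of variables $x=e^{m\pm h}$ decouples the mean parameter $m$ and reduces the two-variable geometric-mean estimate for $\phi''$ to the elementary inequality $\sinh(\gamma h)\le\gamma\sinh h$. The sign bookkeeping for $\alpha<1$ and the degenerate endpoints $\alpha\in\{0,1,2\}$ require only a little care.
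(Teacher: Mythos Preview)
Your proof is correct and follows essentially the same route as the paper's: AM--GM on the $\pi_i,\pi_j$ terms, an exponential substitution to reduce the two-variable inequality for $\phi''(s)=s^{\alpha-2}$ to the one-variable bound $\sinh(\gamma h)\le\gamma\sinh h$ with $\gamma=|\alpha-1|\in[0,1]$, and then the $\kappa$ bound from Theorem~\ref{thm2}. The only cosmetic differences are that the paper verifies the $\sinh$ inequality by differentiating in $h$ (monotonicity of $H(z)=\gamma\sinh z-\sinh(\gamma z)$ from $H(0)=0$) rather than in $\gamma$ via your concavity argument, and it does not single out the limiting cases $\alpha\in\{0,1\}$.
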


We then present several examples of $\phi$-divergences. Typical examples include chi-squared ($\chi^2$), reverse Kullback–Leibler (KL), and KL divergences. Using them, we show the global-in-time convergence result of dynamics \eqref{master} with an analytical convergence rate. 
\begin{example}\label{ex1}
Let $\alpha=2$ and $\phi(x)=\frac{x^2-x}{2}$. Then the $\phi$-divergence forms the $\chi^2$-divergence:  
\begin{equation*}   \mathrm{D}_\phi(p\|\pi)=\frac{1}{2}\sum_{i=1}^n\frac{p_i^2}{\pi_i}.  \end{equation*}
In this case, we have 
\begin{equation*}
  \xi_\phi(\pi_i,\pi_j)= \pi_i+\pi_j.  
\end{equation*}
From Theorem \ref{thm2}, along dynamics \eqref{master}, the $\chi^2$-divergence converges to zero with an exponential convergence rate:
\begin{equation*}
   \kappa= \frac{1}{1-\min_{k\in I}\pi_k}.
\end{equation*}
\end{example}
\begin{example}\label{ex2}
Let $\alpha=1$ and $\phi(x)=x-1-\log x$. Then the $\phi$-divergence forms the reverse Kullback–Leibler divergence:  
\begin{equation*}
   \mathrm{D}_\phi(p\|\pi)=\sum_{i=1}^n\pi_i\log \frac{\pi_i}{p_i}.  \end{equation*}
In this case, we have 
\begin{equation*}
  \xi_\phi(\pi_i,\pi_j)=\inf_{(p_i, p_j)\in [0,1]^2} \Big(\frac{p_j\pi_i}{p_j}+\frac{p_i\pi_j}{p_j}\Big)=2\sqrt{\pi_i\pi_j} . 
\end{equation*}
From Theorem \ref{thm2}, along dynamics \eqref{master}, the reverse KL divergence converges to zero with an exponential convergence rate:
\begin{equation*}
\kappa=\frac{1}{1-\min_{k\in I}\pi_k}\cdot \min_{i,j\in I}\left(1-\frac{1}{2}(\sqrt{\pi_i}-\sqrt{\pi_j})^2
    \right).
\end{equation*}
\end{example}

\begin{example}
Let $\alpha=0$ and $\phi(x)=1-x+x\log x$. The $\phi$-divergence forms the KL divergence:
\begin{equation*}
 \mathrm{D}_\phi(p\|\pi)=\mathrm{D}_{\mathrm{KL}}(p\|\pi):=\sum_{i=1}^np_i\log\frac{p_i}{\pi_i}.   
\end{equation*}
And function $\xi_{\mathrm{KL}}(\pi_i, \pi_j):=\xi_\phi(\pi_i, \pi_j)$ defines a particular symmetric two-variable function below. Define a function $\xi\colon \mathbb{R}^2_+ \to \mathbb{R}_+$ as
$$\xi(s,t):=\inf_{x\in (0,1)\cup(1,\infty)} \frac{(\frac{s}{x} +t)(x-1)}{\log x}.$$
\begin{lemma}\label{lemma1}
We have the following properties.
\begin{enumerate}
\item[(i)] $\xi(\pi_i, \pi_j)=\xi_{\mathrm{KL}}(\pi_i,\pi_j)$;
\item[(ii)] $2\sqrt{st}\leq \xi(s,t)\leq \frac{2(s-t)}{\log(s)-\log(t)}.$
\end{enumerate}
In addition, 
\begin{equation*}
 \xi_{\mathrm{KL}}(\pi_i,\pi_j)\geq 2\sqrt{\pi_i\pi_j}.    
\end{equation*}
\end{lemma}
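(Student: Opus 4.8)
The plan is to establish (i) by an explicit change of variables that collapses the two-variable infimum defining $\xi_{\mathrm{KL}}$ onto the one-variable infimum defining $\xi$, and then to obtain the two bounds in (ii) from one well-chosen test point and a one-line AM--GM estimate, respectively.

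For (i), I would substitute the $\alpha=0$ data $\phi'(x)=\log x$ and $\phi''(x)=1/x$ into the definition of $\xi_\phi$ in Theorem~\ref{thm2}. Writing $u=p_i/\pi_i$ and $v=p_j/\pi_j$, the first factor becomes $\phi''(u)\pi_j+\phi''(v)\pi_i=\frac{\pi_j}{u}+\frac{\pi_i}{v}$ and the second factor equals $\frac{u-v}{\log u-\log v}$. Introducing the ratio $x=v/u=\frac{p_j\pi_i}{p_i\pi_j}$ and replacing $v$ by $xu$, a short computation shows the product simplifies to $\bigl(\frac{\pi_i}{x}+\pi_j\bigr)\frac{x-1}{\log x}$, which depends on $(p_i,p_j)$ only through $x$. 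Since $x$ sweeps out all of $(0,\infty)$ as $(p_i,p_j)$ ranges over $[0,1]^2$ --- with the boundary configurations where some $p_k=0$ contributing only $+\infty$, and the diagonal $u=v$ corresponding to the removable singularity $x=1$ where $\frac{x-1}{\log x}\to1$ --- the infimum equals $\inf_{x\in(0,1)\cup(1,\infty)}\bigl(\frac{\pi_i}{x}+\pi_j\bigr)\frac{x-1}{\log x}$, which is precisely $\xi(\pi_i,\pi_j)$. This gives (i).

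For the upper bound in (ii), I would evaluate the infimand defining $\xi(s,t)$ at the single point $x=s/t$: there $\frac{s}{x}+t=2t$ and $\frac{x-1}{\log x}=\frac{(s-t)/t}{\log s-\log t}$, so the product equals $\frac{2(s-t)}{\log s-\log t}$ (read as the limiting value $2s$ when $s=t$), which yields $\xi(s,t)\le\frac{2(s-t)}{\log s-\log t}$. For the lower bound, I would first note $\frac{x-1}{\log x}>0$ for every $x>0$ with $x\ne1$, and then apply AM--GM in the form $\frac{s}{x}+t\ge\frac{2\sqrt{st}}{\sqrt x}$ to obtain $\bigl(\frac{s}{x}+t\bigr)\frac{x-1}{\log x}\ge 2\sqrt{st}\cdot\frac{x-1}{\sqrt x\,\log x}$; it then suffices to verify the scalar inequality $\frac{x-1}{\sqrt x\,\log x}\ge1$ for $x>0$, $x\ne1$. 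Substituting $x=y^2$ turns this into $y-\frac1y\ge2\log y$ for $y>1$ (and its sign-reversed analogue for $y<1$), which follows since $g(y):=y-\frac1y-2\log y$ satisfies $g(1)=0$ and $g'(y)=\bigl(1-\frac1y\bigr)^2\ge0$. This gives $\xi(s,t)\ge2\sqrt{st}$, and the closing assertion $\xi_{\mathrm{KL}}(\pi_i,\pi_j)\ge2\sqrt{\pi_i\pi_j}$ is then immediate from (i). I expect the only genuinely delicate point to be the reduction in (i): one must confirm that the ratio map is surjective onto $(0,\infty)$ subject to $(p_i,p_j)\in[0,1]^2$ and that the degenerate boundary and diagonal configurations contribute nothing new to the infimum; the two estimates in (ii) are each a single substitution followed by an elementary convexity argument.
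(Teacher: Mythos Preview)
Your proof is correct and follows essentially the same route as the paper: the change of variables $x=\tfrac{p_j\pi_i}{p_i\pi_j}$ (the paper uses the reciprocal, which is immaterial by the symmetry $\xi(s,t)=\xi(t,s)$) for (i), the test point $x=s/t$ for the upper bound in (ii), and a monotonicity argument with a perfect-square derivative for the lower bound. The only cosmetic difference is that for the lower bound the paper works directly with $G(x)=(\tfrac{s}{x}+t)(x-1)-2\sqrt{st}\,\log x$ and computes $G'(x)=(\tfrac{\sqrt{s}}{x}-\sqrt{t})^2\ge 0$, whereas you first apply AM--GM and then prove the residual scalar inequality via $g(y)=y-\tfrac1y-2\log y$ with $g'(y)=(1-\tfrac1y)^2\ge 0$; these are the same computation in different coordinates.
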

 Following Lemma \ref{lemma1}, we show the exponential convergence result of dynamics \eqref{master}. We then derive an analytical convergence rate for the proposed weight function $\omega_{ij}$, which is no less than $\frac{1}{2}$.   
\begin{theorem}\label{thm3}
Denote $p(t)=(p_i(t))_{i=1}^n$, $t\geq 0$, satisfying the Kolmogorov forward equation \eqref{master}. 
Then,  
\begin{itemize}
\item[(i)] the KL divergence converges to zero exponentially in time:
\begin{equation*}
\mathrm{D}_{\mathrm{KL}}(p(t)\|\pi)\leq e^{-2\kappa t}\mathrm{D}_{\mathrm{KL}}(p(0)\|\pi);    
\end{equation*}
\item[(ii)]
the $L_1$ distance converges to zero exponentially in time:
\begin{equation*}  
\sum_{i=1}^n|p_i(t)-\pi_i|\leq \sqrt{2\mathrm{D}_{\mathrm{KL}}(p(0)\|\pi)}e^{-\kappa t},
\end{equation*}
\end{itemize}
for any initial condition $p(0)\in\mathbb{R}_+^n$ with $\sum_{i=1}^np_i(0)=1$. And the global-in-time convergence rate $\kappa>0$ is a constant, satisfying 
\begin{equation*}
    \kappa\geq \frac{1}{1-\min_{k\in I}\pi_k}\cdot \min_{i,j\in I}\left(1-\frac{1}{2}(\sqrt{\pi_i}-\sqrt{\pi_j})^2\right)\geq\frac{1}{2}.
    \end{equation*}

\end{theorem}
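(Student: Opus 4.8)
The plan is to obtain the statement as a direct specialization of Theorem~\ref{thm2} to the Kullback--Leibler case, followed by Pinsker's inequality for the $L_1$ bound. No new dynamical estimate is needed; the work is purely in evaluating and bounding the rate constant $\kappa$ that Theorem~\ref{thm2} produces.

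First I would take $\phi(x)=1-x+x\log x$, which is the $\alpha=0$ instance of \eqref{alpha} and is convex with $\phi(1)=\phi'(1)=0$, so Theorem~\ref{thm2} applies and gives $\mathrm{D}_{\mathrm{KL}}(p(t)\|\pi)\leq e^{-2\kappa t}\mathrm{D}_{\mathrm{KL}}(p(0)\|\pi)$ with
\begin{equation*}
\kappa=\frac{1}{1-\min_{k\in I}\pi_k}\cdot\min_{i,j\in I}\Bigl(1-\tfrac12(\pi_i+\pi_j)+\tfrac12\xi_{\mathrm{KL}}(\pi_i,\pi_j)\Bigr).
\end{equation*}
Next I would invoke Lemma~\ref{lemma1} (equivalently Proposition~\ref{prop2} with $\alpha=0\in[0,2]$) to get $\xi_{\mathrm{KL}}(\pi_i,\pi_j)\geq 2\sqrt{\pi_i\pi_j}$, and then use the elementary identity $1-\tfrac12(\pi_i+\pi_j)+\sqrt{\pi_i\pi_j}=1-\tfrac12(\sqrt{\pi_i}-\sqrt{\pi_j})^2$ to deduce
\begin{equation*}
\kappa\geq\frac{1}{1-\min_{k\in I}\pi_k}\cdot\min_{i,j\in I}\Bigl(1-\tfrac12(\sqrt{\pi_i}-\sqrt{\pi_j})^2\Bigr).
\end{equation*}

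To pass to the uniform bound $\kappa\geq\tfrac12$, I would use $\tfrac{1}{1-\min_k\pi_k}\geq 1$ (since $\min_k\pi_k\geq 0$) together with $(\sqrt{\pi_i}-\sqrt{\pi_j})^2\leq \pi_i+\pi_j\leq 1$ for every pair $(i,j)$: for $i\neq j$ this follows from $\sum_k\pi_k=1$ and $\pi_k\geq 0$, and for $i=j$ the quantity is zero. Hence $1-\tfrac12(\sqrt{\pi_i}-\sqrt{\pi_j})^2\geq\tfrac12$, and multiplying the two bounds gives $\kappa\geq\tfrac12$. This yields part (i) along with the full chain of lower bounds on $\kappa$.

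For part (ii) I would apply Pinsker's inequality to the pair $(p(t),\pi)$, i.e.\ $\sum_{i=1}^n|p_i(t)-\pi_i|\leq\sqrt{2\,\mathrm{D}_{\mathrm{KL}}(p(t)\|\pi)}$, and substitute the bound of part (i) to obtain $\sum_{i=1}^n|p_i(t)-\pi_i|\leq\sqrt{2\,\mathrm{D}_{\mathrm{KL}}(p(0)\|\pi)}\,e^{-\kappa t}$. I do not anticipate any genuine obstacle: the argument is bookkeeping on top of Theorem~\ref{thm2}, Lemma~\ref{lemma1}, and Pinsker's inequality. The only point needing a moment's care is the observation that $\pi_i+\pi_j\leq 1$ for distinct $i,j$, which is exactly what forces the curvature to stay above one-half independently of the target $\pi$; all the real analytic content has already been spent in proving Theorem~\ref{thm2} and Lemma~\ref{lemma1}.
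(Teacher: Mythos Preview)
Your proposal is correct and follows essentially the same route as the paper's own proof: specialize Theorem~\ref{thm2} to $\phi(x)=1-x+x\log x$, use Lemma~\ref{lemma1} (or Proposition~\ref{prop2}) to bound $\xi_{\mathrm{KL}}(\pi_i,\pi_j)\geq 2\sqrt{\pi_i\pi_j}$, simplify via the square identity, and finish with Pinsker's inequality. The only cosmetic difference is that the paper revisits the ratio $a_{ij}/\theta_{ij}$ explicitly rather than quoting the $\kappa$-formula from Theorem~\ref{thm2}, and obtains $\kappa\geq\tfrac12$ already in Theorem~\ref{thm2} via $\xi_\phi\geq 0$ and $\pi_i+\pi_j\leq 1$, whereas you re-derive it from $(\sqrt{\pi_i}-\sqrt{\pi_j})^2\leq \pi_i+\pi_j\leq 1$; the content is the same.
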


\end{example}

\subsection{Proof}\label{sec4}
We present all proofs in this subsection. 
\begin{proof}[Proof of Theorem \ref{thm2}]
Given a probability distribution $\pi=(\pi_1,\ldots, \pi_n)^{\ts}$, denote the $\phi$-divergence (energy function) as $E(p)=\mathrm{D}_\phi(p\|\pi)$. Since
\begin{equation*}
   \omega_{ij}=c\pi_i\pi_j,  \qquad c=\frac{1}{1-\min_{k\in I}\pi_k}, 
\end{equation*}
we have
\begin{equation*}
\begin{aligned}
\theta_{ij}&=c\pi_i\pi_j\frac{\frac{p_i}{\pi_i}-\frac{p_j}{\pi_j}}{\frac{\partial E(p)}{\partial p_i} -\frac{\partial E(p)}{\partial p_j}}\\
&=c\frac{\pi_jp_i-\pi_ip_j}{\phi'(\frac{p_i}{\pi_i})-\phi'(\frac{p_j}{\pi_j})}.
\end{aligned}
\end{equation*}
Hence
$$\eta_{ij}=c(\pi_jp_i-\pi_ip_j).$$
Then
$\Gamma_2(p,f,f)$ can be written as a quadratic form ${\frac{1}{2}}\sum_{i,j=1}^n a_{ij}(f_i-f_j)^2$, where 
\begin{align*}
a_{ij}=&\frac{1}{2}\sum_{k=1}^n \Big(  
\frac{\partial \theta_{ij}}{\partial p_i} \eta_{ki}
+ \frac{\partial \eta_{ij}}{\partial p_i} \theta_{ki} 
+ \frac{\partial \eta_{jk}}{\partial p_j} \theta_{ij}
-\frac{\partial \eta_{ki}}{\partial p_k} \theta_{jk}\\
&\qquad-\frac{\partial \theta_{ij}}{\partial p_j} \eta_{jk}
- \frac{\partial \eta_{ij}}{\partial p_j} \theta_{jk} 
-\frac{\partial \eta_{ki}}{\partial p_i} \theta_{ij}
+\frac{\partial \eta_{jk}}{\partial p_k} \theta_{ki}\Big)\\
=&-\frac{1}{2} \Big( \frac{\partial \theta_{ij}}{\partial p_i}
-\frac{\partial \theta_{ij}}{\partial p_j}\Big)
\eta_{ij} +  \Big( \frac{\partial \eta_{ij}}{\partial p_i}
-\frac{\partial \eta_{ij}}{\partial p_j}\Big)
\theta_{ij}\\
& +\frac{1}{2}\sum_{k\not = i,j}
\Big(
\frac{\partial \eta_{ij}}{\partial p_i} \theta_{ki} 
+ \frac{\partial \eta_{jk}}{\partial p_j} \theta_{ij}
-\frac{\partial \eta_{ki}}{\partial p_k} \theta_{jk}\\
&\hspace{1.5cm}
- \frac{\partial \eta_{ij}}{\partial p_j} \theta_{jk} 
-\frac{\partial \eta_{ki}}{\partial p_i} \theta_{ij}
+\frac{\partial \eta_{jk}}{\partial p_k} \theta_{ki}\Big)\\
=&-\frac{1}{2} \Big( \frac{\partial \theta_{ij}}{\partial p_i}
-\frac{\partial \theta_{ij}}{\partial p_j}\Big)
\eta_{ij} +  \Big( \frac{\partial \eta_{ij}}{\partial p_i}
-\frac{\partial \eta_{ij}}{\partial p_j}\Big)
\theta_{ij}\\
& +\frac{1}{2}\sum_{k\not = i,j}
\Big(
c\pi_j\theta_{ki} 
+ c\pi_k \theta_{ij}
-c\pi_i\theta_{jk}\\
&\hspace{1.8cm}+ c\pi_i \theta_{jk} 
+c\pi_k \theta_{ij}
-c\pi_j \theta_{ki}\Big)\\
=&-\frac{1}{2} \Big( \frac{\partial \theta_{ij}}{\partial p_i}
-\frac{\partial \theta_{ij}}{\partial p_j}\Big)
\eta_{ij} +  c(\pi_j+\pi_i)\theta_{ij}
 + c\sum_{k\not=i,j} \pi_k \theta_{ij}\\
=&-\frac{1}{2} \Big( \frac{\partial \theta_{ij}}{\partial p_i}
-\frac{\partial \theta_{ij}}{\partial p_j}\Big)
\eta_{ij}  + c\theta_{ij}\sum_{k=1}^n \pi_k\\
=& -\frac{c}{2} \Big( \frac{\partial \theta_{ij}}{\partial p_i} 
-\frac{\partial \theta_{ij}}{\partial p_j}\Big)(\pi_jp_i-\pi_i p_j)
+c \theta_{ij}.
\end{align*}
Thus, we have
$$\frac{a_{ij}}{\theta_{ij}} = 
c\left(1-\frac{1}{2}\left(\frac{\partial\log \theta_{ij}}{\partial p_i} 
-\frac{\partial\log \theta_{ij}}{\partial p_j}\right)(\pi_jp_i-\pi_i p_j)
\right).$$
Denote $\frac{\partial}{\partial p_i}-\frac{\partial}{\partial p_j}$ as  $\partial_{\vec{ij}}$.
Then $$\log \theta_{ij}=\log(c) + \log(\pi_j p_i - \pi_i p_j) - \log (\partial_{\vec{ij}}E),$$
and 
$$\partial_{\vec{ij}}\log \theta_{ij} =\frac{\pi_j+\pi_i}{\pi_j p_i - \pi_i p_j}- \frac{\partial^2_{\vec{ij}}E} {\partial_{\vec{ij}}E}.
$$
We have
\begin{equation}
\frac{a_{ij}}{\theta_{ij}} 
= c\left(1-\frac{\pi_i+\pi_j}{2}\right)
+\frac{1}{2}\theta_{ij} \partial^2_{\vec{ij}}E.
\end{equation}
If $E(p)$ is convex, then $\theta_{ij}>0$ and $\partial^2_{\vec{ij}}E\geq 0$. Thus 
\begin{equation}
\kappa\geq c\left(1-\frac{\pi_i+\pi_j}{2}\right ).
\end{equation}
Since $c\geq 1$ and $\frac{\pi_i+\pi_j}{2}\leq \frac{1}{2}$, we have  $\kappa\geq \frac{1}{2}$. Following Corollary \ref{CA}, we have 
\begin{equation*}
  \mathrm{D}_{\phi}(p(t)\|\pi)\leq e^{-2\kappa t}   \mathrm{D}_{\phi}(p(0)\|\pi). 
\end{equation*}
\end{proof}
\begin{proof}[Proof of Proposition \ref{prop2}]
Since $\phi$ is defined in equation \eqref{alpha}, then 
\begin{equation*}
\phi'(x)=\frac{x^{\alpha-1}}{\alpha-1},\qquad \phi''(x)=x^{\alpha-2}.    
\end{equation*}
Thus, the two variable function $\xi_\phi(\pi_i,\pi_j)$ satisfies 
$$\xi_\phi(\pi_i,\pi_j)
=\inf_{(p_i,p_j)\in [0,1]^2}~\left(\left(\frac{p_i}{\pi_i}\right)^{\alpha-2}\pi_j+\left(\frac{p_j}{\pi_j}\right)^{\alpha-2}\pi_i\right)\cdot\frac{\frac{p_i}{\pi_i}-\frac{p_j}{\pi_j}}{\frac{1}{\alpha-1}((\frac{p_i}{\pi_i})^{\alpha-1}-(\frac{p_j}{\pi_j})^{\alpha-1})}.
$$
To show $\xi_\phi(\pi_i, \pi_j)\geq 2\sqrt{\pi_i, \pi_j}$, we need to prove 
\begin{equation}\label{Ineq}
 u^{\alpha-2}\pi_j+v^{\alpha-2}\pi_i\geq \frac{2\sqrt{\pi_i\pi_j}}{\alpha-1}\frac{u^{\alpha-1}-v^{\alpha-1}}{u-v},   
\end{equation}
where $u=\frac{p_i}{\pi_i}$ and $v=\frac{p_j}{\pi_j}$. Using the Cauchy-Schwarz inequality on the L.H.S. of \eqref{Ineq}, we shall show the following inequality:
\begin{equation}\label{Ineq1}
  2\sqrt{\pi_i\pi_j}  (uv)^{\frac{\alpha-2}{2}}\geq   \frac{2\sqrt{\pi_i\pi_j}}{\alpha-1} \frac{u^{\alpha-1}-v^{\alpha-1}}{u-v}. 
\end{equation}
Denote $w=(\frac{u}{v})^{\frac{1}{2}}$. Dividing $2\sqrt{\pi_i\pi_j}(u v)^{\frac{\alpha-2}{2}}$ on both sides of inequality \eqref{Ineq1}, we have to show that 
\begin{equation*}
  1\geq \frac{1}{\alpha-1}\frac{w^{\alpha-1}-\frac{1}{w^{\alpha-1}}}{w-\frac{1}{w}}. 
\end{equation*}
Denote $w=e^{z}$, and $\beta=\alpha-1$. Since $\alpha\in[0,2]$, then $\beta\in [-1,1]$. Thus the above inequality \eqref{Ineq1} satisfies 
\begin{equation*}
    1\geq \frac{1}{\beta}\frac{e^{\beta z}-e^{-\beta z}}{e^z-e^{-z}}=\frac{1}{|\beta|}\frac{\sinh(|\beta|z)}{\sinh(z)}, \qquad \textrm{for $|\beta|\in [0,1]$}. 
\end{equation*}
Note that $\sinh(a)=\frac{e^{a}-e^{-a}}{2}$. We shall prove 
\begin{equation*}
  H(z):= |\beta|\sinh(z)-\sinh (|\beta| z)\geq 0,  \quad \textrm{for $|\beta|\in [0,1]$ and $z\geq 0$.}
\end{equation*}
 When $|\beta|\leq 1$, we have 
\begin{equation*}
    H'(z)=|\beta|\big(\cosh(z)-\cosh(|\beta|z)\big)\geq 0. 
\end{equation*}
Thus $H(z)$ is an increasing function with $H(0)=0$. Hence $H(z)\geq 0$. This finishes the proof. 
\end{proof}

\begin{proof}[Proof of Lemma \ref{lemma1}]
If $\phi(x)=x\log x+1-x$, then $\phi'(x)=\log x$ and $\phi''(x)=\frac{1}{x}$. Hence 
\begin{equation*}
\begin{split}
 \pi_i\pi_j\left(\frac{1}{p_i}+\frac{1}{p_j}\right)\frac{\frac{p_i}{\pi_i}-\frac{p_j}{\pi_j}}{
\log\frac{p_i}{\pi_i}-\log\frac{p_j}{\pi_j}}
=&\left(\pi_j\frac{\pi_i p_j}{p_i\pi_j}+\pi_i\right)\frac{\frac{p_i\pi_j}{\pi_i p_j}-1}{\log\frac{p_i\pi_j}{\pi_i p_j}}.
\end{split}
\end{equation*}
Denote $x=\frac{p_i\pi_j}{p_j\pi_i}$. Then we prove (i). 

We next prove (ii). Define the following function: 
\begin{equation*}
   F(x,s,t)=\left\{\begin{aligned}
   &\frac{(\frac{s}{x}+t)(x-1)}{\log x}, \hspace{1cm} \textrm{if $x\neq 1$};\\
   & s+t,\hspace{2.5cm} \textrm{if $x=1$.}
\end{aligned}\right.
\end{equation*}
Clearly, $\xi(s,t)=\inf_{x\in\mathbb{R}_+}F(x,s,t)$. Hence 
\begin{equation*}
   \xi(s,t)\leq F\left(\frac{s}{t}, s, t\right)=\frac{2(s-t)}{\log s-\log t}.  
\end{equation*}
We next prove $F(x,s,t)\geq 2\sqrt{st}$. Denote 
\begin{equation*}
   G(x)=\left(\frac{s}{x}+t\right)(x-1)-2\sqrt{st}\log x.  
\end{equation*}
We note that $G(x)$ is an increasing function when  $x\in\mathbb{R}_+$. Since 
\begin{equation*}
  G'(x)=\frac{s}{x^2}+t-2\frac{\sqrt{st}}{x}=\left(\frac{\sqrt{s}}{x}-\sqrt{t}\right)^2\geq 0.   
\end{equation*}
Since $G(1)=0$ and $G$ is an increasing function, we have 
\begin{equation*}
  \left\{\begin{aligned}
    &  G(x)\leq 0,\quad \textrm{if $x<1$;}\\
    &  G(x)> 0,\quad \textrm{if $x>1$.}\\
  \end{aligned}\right.  
\end{equation*}
This finishes the proof of (ii).

From (i) and (ii), we have 
\begin{equation*}
 \xi_{\mathrm{KL}}(\pi_i, \pi_j)=\xi(\pi_i, \pi_j)\geq 2\sqrt{\pi_i\pi_j}.    
\end{equation*}
\end{proof}
\begin{proof}[Proof of Theorem \ref{thm3}]
When $\phi(x)=x\log x$ and  $\mathrm{D}_\phi(p\|\pi)=\mathrm{D}_{\mathrm{KL}}(p||\pi)$. From Lemma \ref{lemma1}, we have
\begin{align*}
\frac{a_{ij}}{\theta_{ij}}&=c\left(1-\frac{1}{2}(\pi_i+\pi_j) + \frac{\pi_i\pi_j}{2}\left(\frac{1}{p_i}+\frac{1}{p_j}\right)\frac{\frac{p_i}{\pi_i}-\frac{p_j}{\pi_j}}{
\log\frac{p_i}{\pi_i}-\log\frac{p_j}{\pi_j}
}\right)\\
&\geq c\left(1-\frac{1}{2}(\pi_i+\pi_j)  
+\frac{1}{2}\xi_{\mathrm{KL}}(\pi_i,\pi_j)\right)\\
&\geq  c\left(1-\frac{1}{2}(\pi_i+\pi_j) 
+\sqrt{\pi_i\pi_j}\right)\\
&=c\left(1-\frac{1}{2}(\sqrt{\pi_i}-\sqrt{\pi_j})^2 \right).
\end{align*}
Hence 
\begin{equation*}
   \kappa\geq  c\left(1-\frac{1}{2}(\sqrt{\pi_i}-\sqrt{\pi_j})^2 \right). 
\end{equation*}
From Corollary \ref{CA}, we have 
\begin{equation*}
  \mathrm{D}_{\mathrm{KL}}(p(t)\|\pi)\leq e^{-2\kappa t}   \mathrm{D}_{\mathrm{KL}}(p(0)\|\pi). 
\end{equation*}
From Pinsker's inequality, we obtain 
\begin{equation*}
     \sum_{i=1}^n|p_i(t)-\pi_i| \leq \sqrt{2 \mathrm{D}_{\mathrm{KL}}(p(t)\|\pi)}\leq e^{-\kappa t}   \sqrt{2\mathrm{D}_{\mathrm{KL}}(p(0)\|\pi)}. 
\end{equation*}
This finishes the proof. 
\end{proof}

\begin{remark}
Our proof shows that the global convergence rate depends on the estimation of function $\xi_\phi$, which forms a class of two-variable functions; see an example in Lemma \ref{lemma1}. We only use a rough estimate with $\xi_{\mathrm{KL}}(s,t)\geq 2\sqrt{st}$. 
In future works, we shall study the properties and lower bounds of $\xi_\phi$ in general $\phi$-divergence functions.  
\end{remark}
\section{Examples}\label{sec5}
This section presents several analytical examples of $Q$-matrix \eqref{Q}. We also provide numerical examples to verify exponential convergence results of dynamics \eqref{master}.  
\subsection{Analytical example}
\begin{example}[Two point state]
Given $\pi=(\pi_1,\pi_2)^{\ts}\in\mathbb{R}_+^2$ with $\pi_1+\pi_2=1$, denote 
$c=\min \Big\{\frac{1}{1-\pi_1}, \frac{1}{1-\pi_2}\Big\}= \min \Big\{\frac{1}{\pi_2}, \frac{1}{\pi_1}\Big\}$. Then the $Q$-matrix \eqref{Q} satisfies 
\begin{equation*}
\begin{split}
   Q=&\begin{pmatrix}
       -c(1-\pi_1) & c\pi_2  \\   
         c\pi_1 & -c(1-\pi_2)
   \end{pmatrix}\\
=&\begin{pmatrix}
       -\min\{1,\frac{\pi_2}{\pi_1}\} & \min\{1,\frac{\pi_2}{\pi_1}\}  \\   
          \min\{1,\frac{\pi_1}{\pi_2}\} & - \min\{1,\frac{\pi_1}{\pi_2}\}
   \end{pmatrix},
\end{split}
\end{equation*}
which is exactly the $Q$-matrix in Metropolis–Hastings algorithms. 
\end{example}

\begin{example}[Three point state]\label{three}
Given $\pi=(\pi_1,\pi_2, \pi_3)^{\ts}\in\mathbb{R}_+^3$ with $\pi_1+\pi_2+\pi_3=1$, denote 
$c=\min \Big\{\frac{1}{1-\pi_1}, \frac{1}{1-\pi_2}, \frac{1}{1-\pi_3}\Big\}$. 
The $Q$-matrix \eqref{Q} satisfies 
\begin{equation*}
\begin{split}
   Q=&\begin{pmatrix}
       -c(1-\pi_1) & c\pi_2 & c\pi_3\\    
         c\pi_1 & -c(1-\pi_2)  & c\pi_3\\
        c\pi_1 & c\pi_2 & -c(1-\pi_3)
   \end{pmatrix}.
\end{split}
\end{equation*}
We also compare the proposed $Q$-matrix \eqref{Q} with the generator in the Metropolis-Hastings algorithm, denoted as $Q^{\mathrm{MH}}$. And the $Q^{\mathrm{MH}}$-matrix can be computed blow: 
\begin{equation*}
\footnotesize
   \frac{1}{2}\begin{pmatrix}
       -\min\{1, \frac{\pi_2}{\pi_1}\}- \min\{1, \frac{\pi_3}{\pi_1}\} & \min\{1, \frac{\pi_2}{\pi_1}\} & \min\{1, \frac{\pi_3}{\pi_1}\}\\    
         \min\{1, \frac{\pi_1}{\pi_2}\} & - \min\{1, \frac{\pi_1}{\pi_2}\}-\min\{1, \frac{\pi_3}{\pi_2}\} &    \min\{1, \frac{\pi_3}{\pi_2}\}\\
        \min\{1, \frac{\pi_1}{\pi_3}\} &  \min\{1, \frac{\pi_2}{\pi_3}\} & -\min\{1, \frac{\pi_1}{\pi_3}\}-\min\{1, \frac{\pi_2}{\pi_3}\}
   \end{pmatrix}. 
\end{equation*}
We remark that $Q\neq Q^{\mathrm{MH}}$.
\end{example}
\subsection{Numerical examples}
In this subsection, we also numerically verify the proposed exponential convergence results. We randomly choose $p^0(\omega_k)\neq \pi(\omega_k)\in \mathbb{R}_+^n$, with $\sum_{i=1}^np^0_i(\omega_k)=\sum_{i=1}^n\pi(\omega_k)=1$, where $\omega_k$ is a random sampling realization with $k=1,\cdots, K=100$.  
For each realization $\omega_k$, we compute a forward Euler time discretization of ODE \eqref{master}:
\begin{equation}\label{Euler}
  \frac{p^{N+1}_i(\omega_k)-p^N_i(\omega_k)}{\Delta t}=\sum_{j=1}^n\Big[Q_{ji}p_j^N(\omega_k)-Q_{ij}p_i^N(\omega_k)\Big],   
\end{equation}
where $p^0(\omega_k)\in\mathbb{R}_+^n$ is a random initial point and $\Delta t=0.01$ is a stepsize. Two choices of matrices are considered. One is the $Q$-matrix \eqref{Q}, the other is the $Q^{\mathrm{MH}}=(Q^{\mathrm{MH}}_{ij})_{1\leq i,j\leq n}$, where $Q^{\mathrm{MH}}_{ij}=\frac{1}{n-1}\min\{1,\frac{\pi_j}{\pi_i}\}$, if $j\neq i$, and $Q^{\mathrm{MH}}_{ii}=-\sum_{k=1,k\neq i}^nQ^{\mathrm{MH}}_{ik}$. We also run the forward Euler method \eqref{Euler} with matrix $Q$ and matrix $Q^{\mathrm{MH}}$ for $N=0,1,\cdots, \frac{T}{\Delta t}$, and $T=10$. After computing all time steps and realizations, we compute the sample average $L^1$ distance between $p^N$ and $\pi$:
\begin{equation*}
   \frac{1}{K}\sum_{k=1}^K\sum_{i=1}^n |p^N(\omega_k)-\pi(\omega_k)|.  
\end{equation*}
In Figure \ref{fig:four_graph}, we plot the convergence result of average $L^1$ distances between $p^N$ and $\pi$ in terms of the time variable. It shows that the dynamics \eqref{master} with the proposed $Q$-matrix \eqref{Q} converges faster than $Q^{\mathrm{MH}}$-matrix in Metropolis-Hastings algorithm. 

\begin{figure}
     \centering
     \begin{subfigure}[b]{0.47\textwidth}
         \centering
         \includegraphics[width=\textwidth]{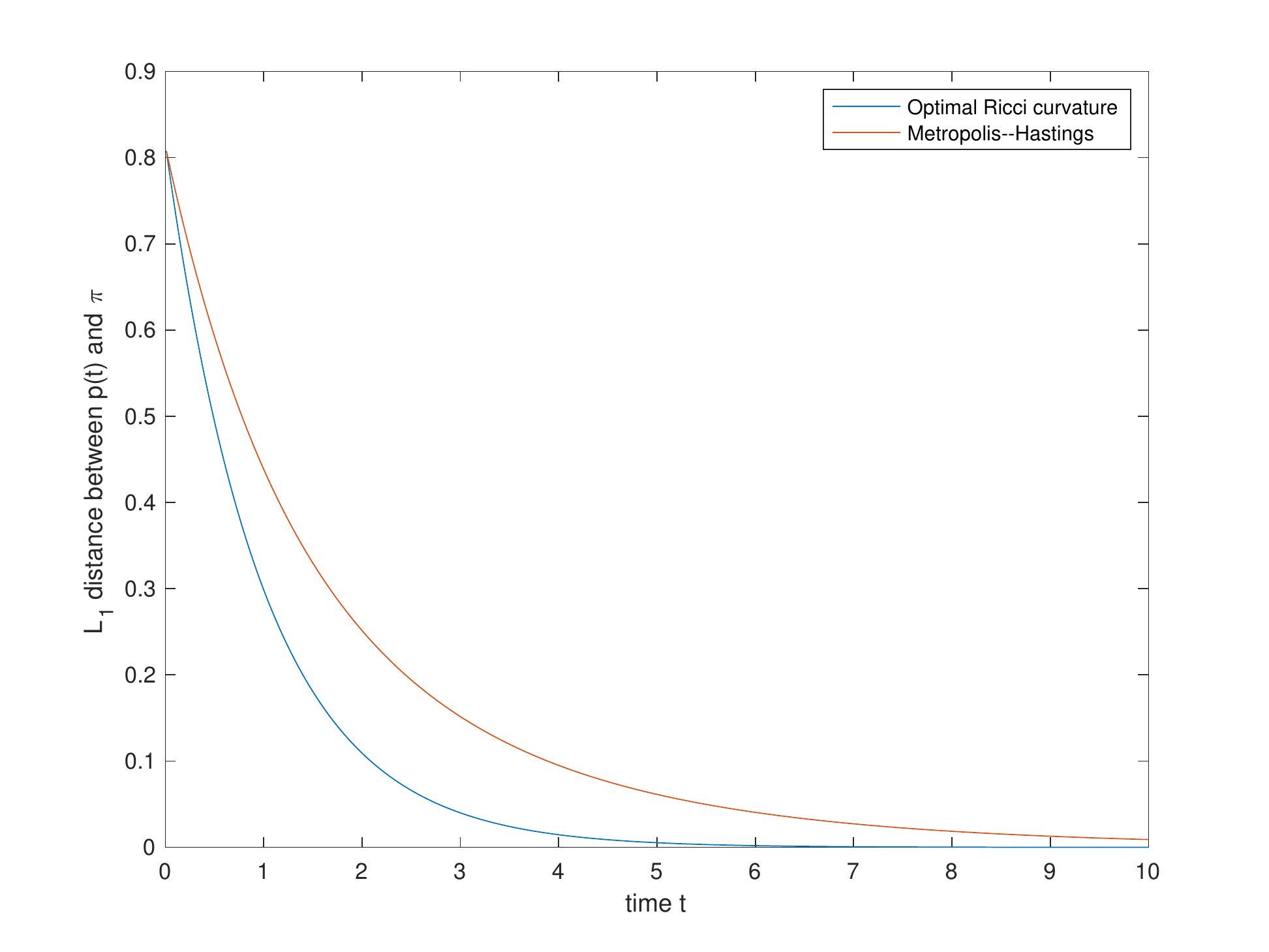}
         \caption{$n=250$.}
     \end{subfigure}
     \hfill
     \begin{subfigure}[b]{0.47\textwidth}
         \centering
         \includegraphics[width=\textwidth]{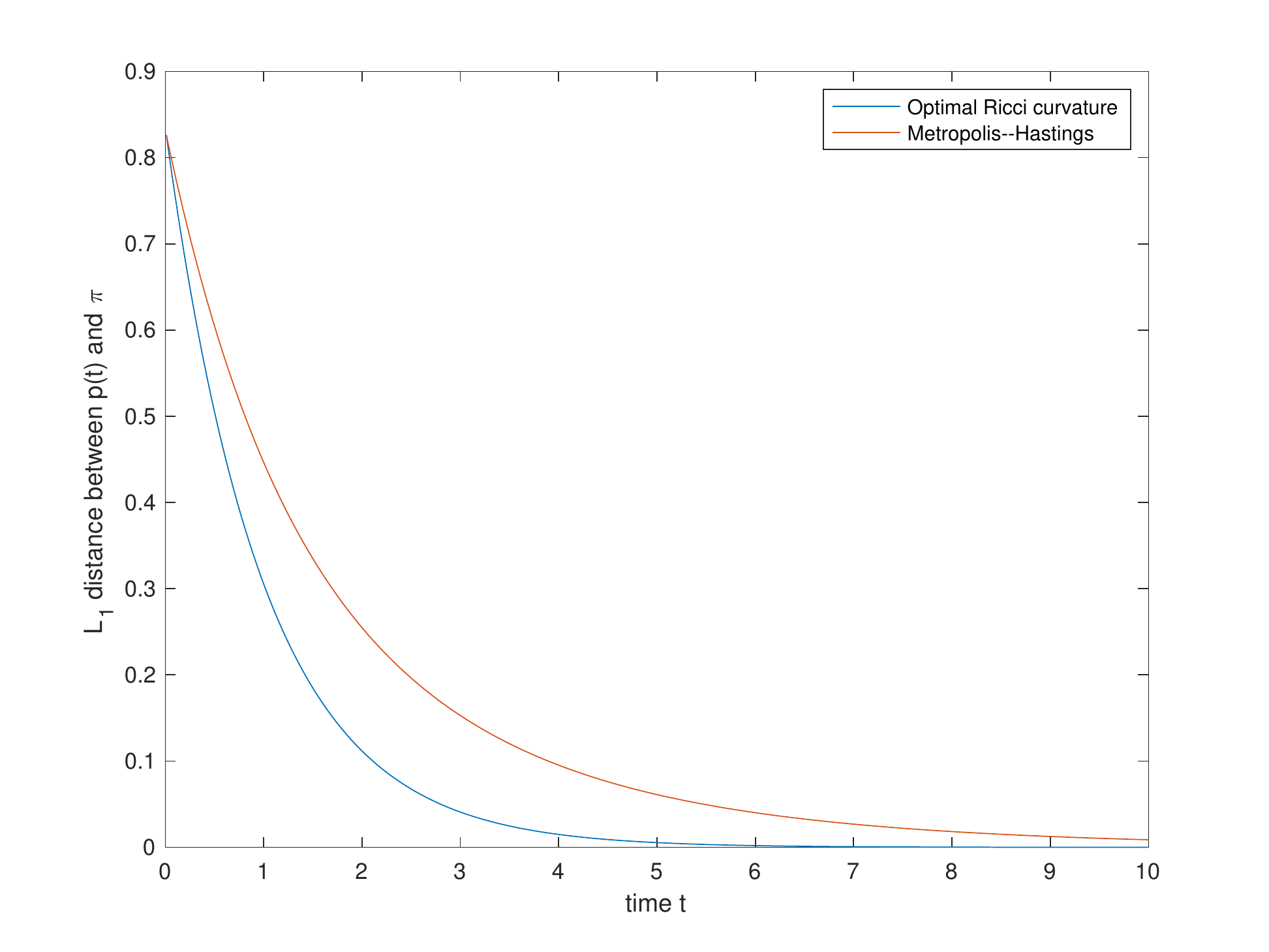}
         \caption{$n=500$.}
     \end{subfigure}
     \hfill
     \begin{subfigure}[b]{0.47\textwidth}
         \centering
         \includegraphics[width=\textwidth]{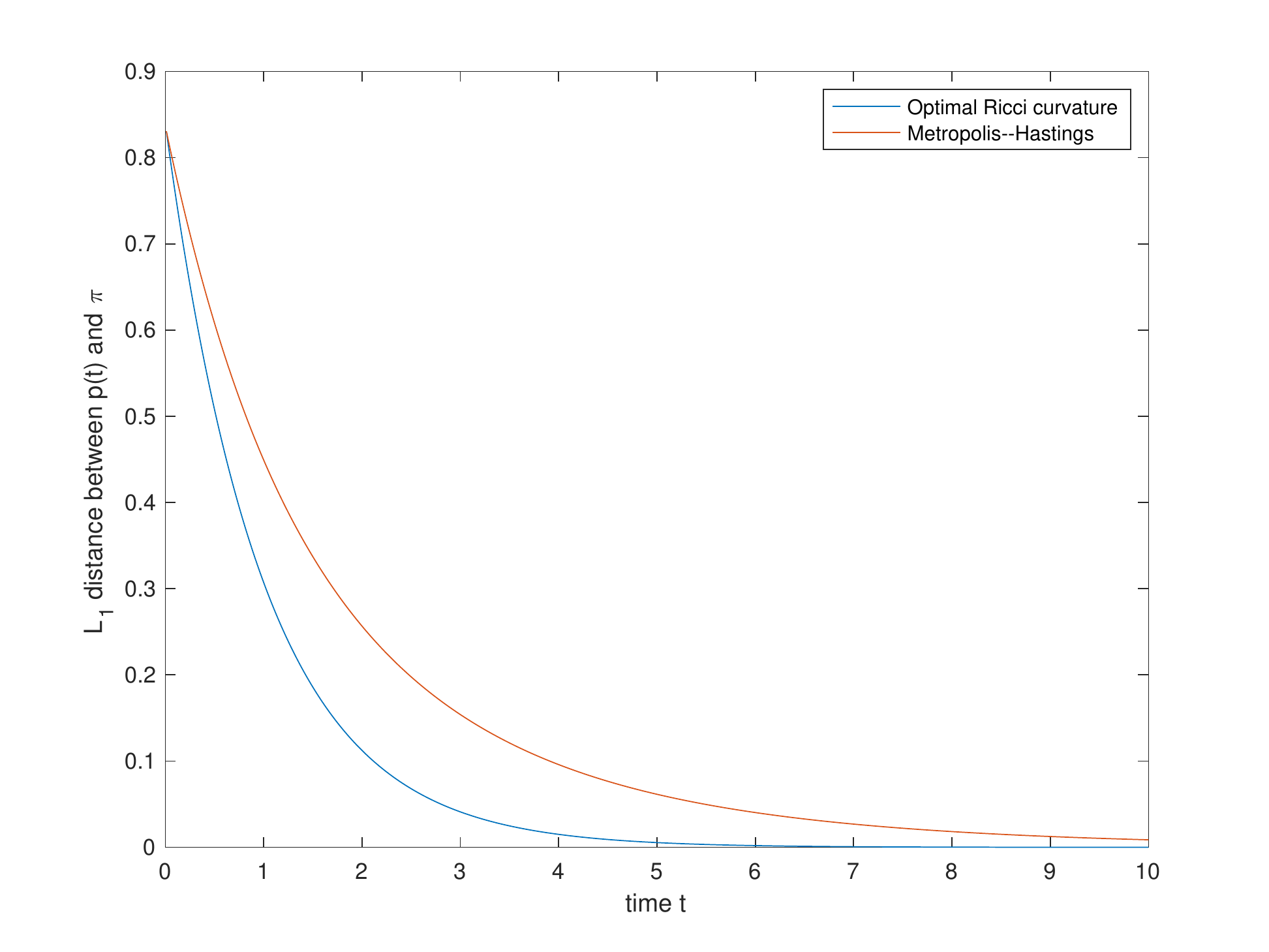}
         \caption{$n=1000$.}
     \end{subfigure}
          \hfill
     \begin{subfigure}[b]{0.47\textwidth}
         \centering
         \includegraphics[width=\textwidth]{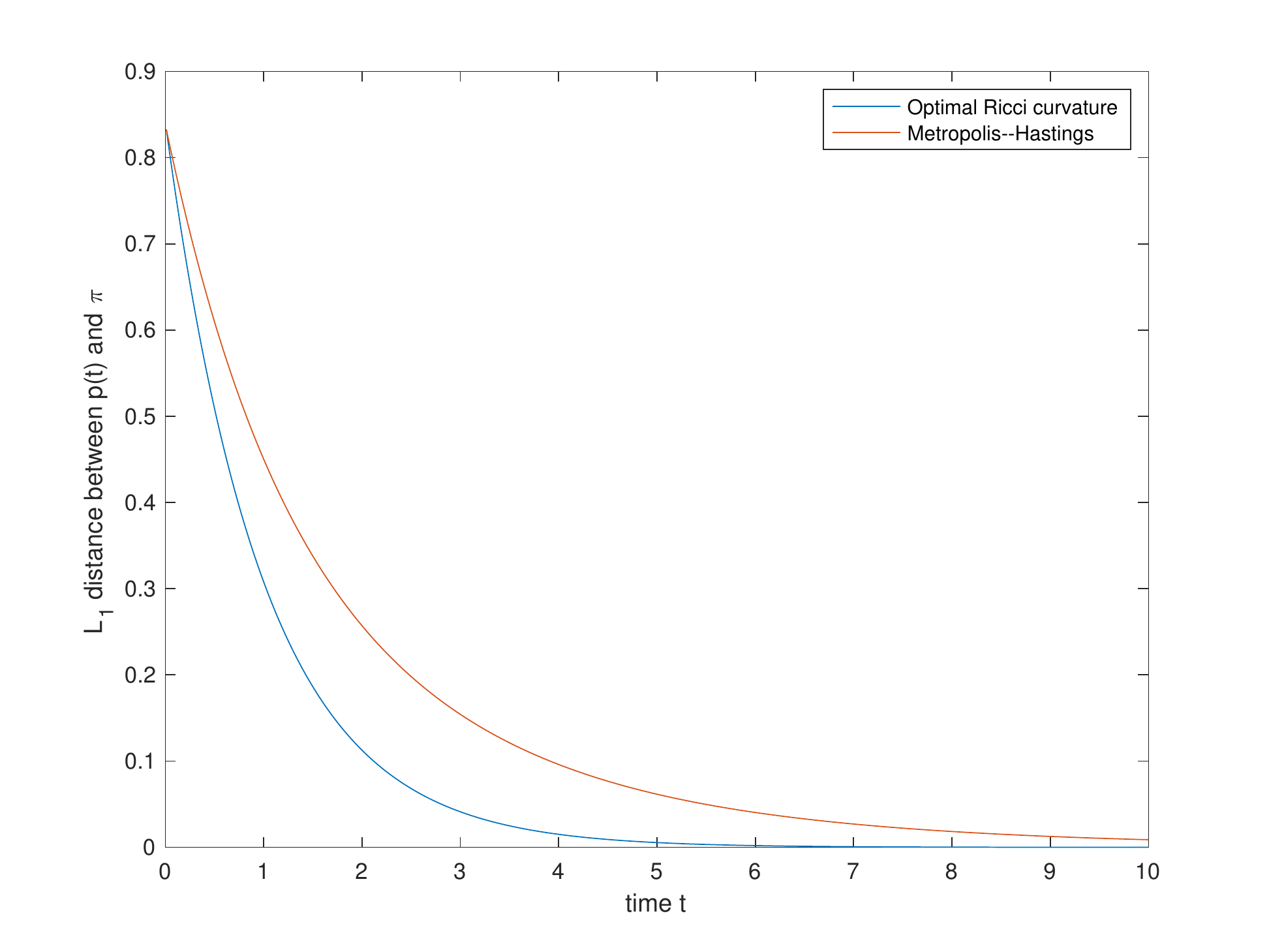}
         \caption{$n=2000$.}
     \end{subfigure}
        \caption{The above four plots demonstrate the convergence results of dynamics\eqref{master} for $n=250, 500, 1000, 2000$. The $x$-axis represents the time, and the $y$-axis represents the $L_1$ distance between $p(t)$ and $\pi$. 
         The blue curve represents the convergence behavior of the proposed $Q$ matrix \eqref{Q}, while the red curve demonstrates the convergence behavior of the Metropolis-Hastings algorithm. }
        \label{fig:four_graph}
\end{figure}

\section{Discussions}
We also observe some insights from the eigenvalues of the $Q$-matrix \eqref{Q}. We rewrite the $Q$-matrix \eqref{Q} as follows:
\begin{equation*}
   Q=c \Big(I-(1,\cdots,1) (\pi_1,\cdots, \pi_n)^{\ts}\Big),
\end{equation*}
where $I\in\mathbb{R}^{n\times n}$ is an identity matrix and  $(1,\cdots,1) (\pi_1,\cdots, \pi_n)^{\ts}$ is a rank-one matrix. Thus, the $ Q$ matrix \eqref{Q} is a rank-one modification of the identity matrix, whose most eigenvalues are $-1$. Moreover, as shown in this paper, we prove the convergence result of master equation \eqref{master} with $Q$-matrix \eqref{Q}, where the convergence rate is guaranteed to be independent of any target distribution $\pi$. 

We note that the current construction of the weight function is dense. It is essentially a complete graph. This brings challenges in MCMC computations with extremely large states. In future work, we shall design a sparse graph weight function with maximal Ricci curvature lower bound. We are also working on the convergence analysis for discrete-time Markov chains. This is also based on mean field information matrices calculations \cite{LiLuGraph}.

\end{document}